\newtheorem{theorem}{Theorem}[section]
\title{Another approach to build Lyapunov functions for the first order methods in the quadratic case}
\author{%
  Daniil~Merkulov\thanks{
  Skolkovo Institute of Science and Technology} \thanks{Moscow Institute of Physics and Technology}\\
  \texttt{d.merkulov@skoltech.ru}
  \and
  Ivan~Oseledets\thanks{
  Artificial Intelligence Research Institute} \footnotemark[1]\\
  \texttt{i.oseledets@skoltech.ru}
}
\date{}
\begin{document}

\maketitle

\begin{abstract}
    Lyapunov functions play a fundamental role in analyzing the stability and convergence properties of optimization methods. In this paper, we propose a novel and straightforward approach for constructing Lyapunov functions for first-order methods applied to quadratic functions. Our approach involves bringing the iteration matrix to an upper triangular form using Schur decomposition, then examining the value of the last coordinate of the state vector. This value is multiplied by a magnitude smaller than one at each iteration. Consequently, this value should decrease at each iteration, provided that the method converges. We rigorously prove the suitability of this Lyapunov function for all first-order methods and derive the necessary conditions for the proposed function to decrease monotonically. Experiments conducted with general convex functions are also presented, alongside a study on the limitations of the proposed approach.

    Remarkably, the newly discovered Lyapunov function is straightforward and does not explicitly depend on the exact method formulation or function characteristics like strong convexity or smoothness constants. In essence, a single expression serves as a Lyapunov function for several methods, including Heavy Ball, Nesterov Accelerated Gradient, and Triple Momentum, among others. To the best of our knowledge, this approach has not been previously reported in the literature.
\end{abstract}

\section{Introduction}
Consider the following quadratic optimization problem:
\begin{equation}
    \label{problem}
    \min\limits_{x \in \mathbb{R}^d} f(x) =  \min\limits_{x \in \mathbb{R}^d} \dfrac{1}{2} x^T W x - b^T x + c, \text{ where }W \in \mathbb{S}^d_{++}
\end{equation}
The problem itself is equivalent to solving a linear system with a positive definite matrix. However, this problem is not the main object of study. In the example of solving this problem, we want to study a specific way of constructing the \textit{Lyapunov function} for the optimization methods. It is quite common for the convergence of accelerated first-order methods to be non-monotonic, even for quadratic objectives \cite{goh2017why, giselsson2014monotonicity} (see the numerical example in Figure~\ref{oplyap:fig:hb_non_monotonic}). These methods include the Polyak Heavy Ball method \cite{polyak1964some}, Nesterov's accelerated gradient method \cite{nesterov2003introductory}, the triple momentum method \cite{van2017fastest}, and others. It is important to note that these methods are widely used in modern neural network training, including in the currently popular large language models. The quadratic problem appears not only in usual linear least squares problems but also in consensus search problems in decentralized systems \cite{gorbunov2022recent}. Therefore, studying their convergence is of great significance.

Lyapunov functions have proven to be invaluable tools for understanding the stability properties of optimization algorithms. They offer a systematic approach for assessing convergence, boundedness, and even optimality in the iterative optimization process. These functions capture the behavior of an algorithm by assigning a scalar quantity to each iteration, enabling a broader understanding of the underlying dynamics and convergence behavior. One may think of them as energy functions that should not increase during the optimization trajectory of a given method. The main objective of this paper is to present a simple approach for constructing Lyapunov functions for such methods. For example, the following function will monotonically decrease for the Heavy Ball and Nesterov Accelerated Gradient methods applied to \eqref{problem} (assuming that $x^*$ is the solution and optimal hyperparameters were chosen):

\begin{equation}
    \label{oplyap:eq:base}
    V(x_k, x_{k-1}, x_{k-2}) = \Vert x_{k-1} - x^*\Vert ^2 - \langle x_k - x^*, x_{k-2} - x^* \rangle
\end{equation}

\begin{figure}[h!]
    \centering
    \includegraphics[width=\linewidth]{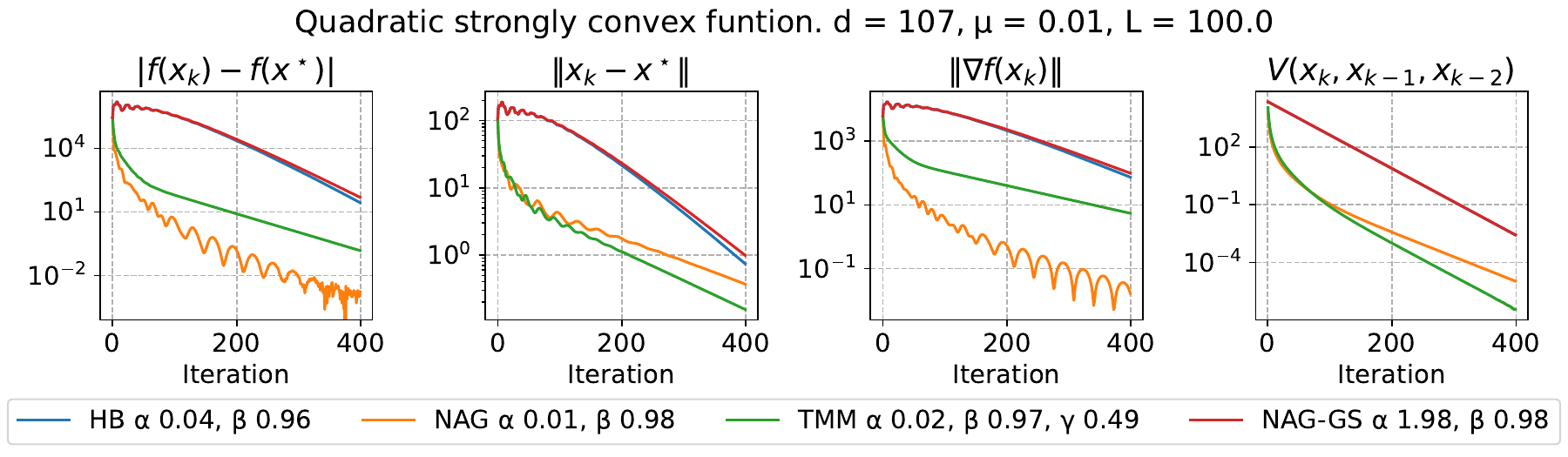}
    \caption{Dynamics of several accelerated methods with optimal hyperparameters $\alpha, \beta, \gamma$ applied to the general strongly convex quadratic problem \eqref{problem} with dimension $d = 107$ are presented. Here and later, we use the following notation: \textbf{HB} - Heavy Ball \cite{polyak1964some}, \textbf{NAG} - Nesterov Accelerated Gradient \cite{nesterov1983method}, \textbf{TMM} - Triple Momentum Method \cite{van2017fastest}, \textbf{NAG-GS} - Nesterov Accelerated Gradient with Gauss-Seidel Splitting \cite{leplat2022nag}. It is easy to see that the usual metrics on the first three subfigures on the left do not demonstrate a monotonic decrease, while the proposed Lyapunov function $V(x_k, x_{k-1}, x_{k-2})$ from \eqref{oplyap:eq:base} works for all these methods.}
    \label{oplyap:fig:hb_non_monotonic}
\end{figure}

Recent advances in discovering Lyapunov functions can be found in the book \cite{vorontsova2021convex}, which covers a much more general setup of minimizing general (possibly non-quadratic) $\mu$-strongly convex functions. Sometimes, in the context of accelerated methods, the Lyapunov function is also referred to as the \textit{potential} function \cite{d2021acceleration}, especially when the problem is convex but not strongly convex. Important results have been obtained regarding the provably fastest Lyapunov functions \cite{taylor2018lyapunov} over a parameterized family of functions (called Lyapunov function candidates) for general strongly convex smooth functions. The parametric class of Lyapunov functions considered were quadratic functions. The class of minimized functions was $\mu$-strongly convex with $L$-Lipschitz gradient. However, constructing such functions involves solving a small but rather complex SDP problem. A fair comparison of the functions from \cite{taylor2018lyapunov} and a function presented in the current work is a topic for further research.

\section{Illustrative example: Heavy Ball method}

If we have a first-order method applied to a quadratic function, which operates on the current point $ x_k $ (note that the gradient at the current point is also expressed in this term $ \nabla f(x_k) = W x_k - b $) and the previous point $ x_{k-1} $, it can be formulated as follows:
\begin{equation}
    \label{oplyap:dynsys}
    z_{k+1} = Mz_k,
\end{equation}
where $ z_k $ is a state vector (for example, $ z_{k} = (x_k, x_{k-1}) $). The system of equation \eqref{oplyap:dynsys} represents a linear dynamical system. The idea behind constructing a Lyapunov function involves creating a positive quantity that decreases along the trajectories of the dynamical system.

The presence of such a function ensures the convergence of the dynamical system. It should be noted that for the optimization problem of a function of general form (non-quadratic), it is not possible to write down the iteration of the method with the help of a dynamic linear system.

Consider the Heavy Ball method \cite{polyak1964some}.
\begin{equation}\label{oplyap:polyak}
    x_{k+1} = x_k - \alpha \nabla f(x_k) + \beta (x_k - x_{k-1}).
\end{equation}
It is well-known \cite{polyak1964some, ghadimi2015global} that $ \alpha $ and $ \beta $ can be selected in such a way that the Heavy Ball method achieves global convergence. This can be demonstrated by finding a suitable \emph{Lyapunov function} $ V(x_k, x_{k-1}) $ that decays along the iterations.

Our goal is to develop a systematic way to propose candidates for Lyapunov functions for the method \eqref{oplyap:dynsys}. Table~\ref{oplyap:tab:two_step_methods} illustrates how different methods can be expressed in this way and how the iteration matrix $ M $ appears in each particular case. The idea can be applied to any standard optimization method, but we will illustrate how it works for the Polyak method and later generalize the idea to any method presented in the form of \eqref{oplyap:dynsys} under some conditions on the iteration matrix $ M $.

\subsection{Reduction to a scalar case}
\label{oplyap:sec:reduction_to_a_scalar}
For the sake of clarity, we will start with the case when $f(x)$ is a strongly convex quadratic function with the minimum at $x^\star = 0$, thus it could be written as 
\begin{equation}
 \label{oplyap:eq:original_problem}
 \begin{split}
 f(x) = & \frac12 \langle Wx, x\rangle \to \min_{x \in \mathbb{R}^d},\\ 
 W &\in \mathbb{S}^d_{++}.
 \end{split}
\end{equation}

Where $W \in \mathbb{S}^d_{++}$ means, that matrix $W$ is symmetric positive definite (SPD). Moreover, we will assume the following characteristics about it: $0 < \mu = \lambda_{min}(W)$; $\lambda_{max}(W) = L$. Note, that $\mu$ is a strong convexity constant, while $L$ - is the Lipschitz constant for the gradient of the function.

Since $W$ is an SPD matrix, it has the eigendecomposition $W = Q \Lambda Q^*$, where $Q$ is orthogonal and $\Lambda$ is diagonal. If we assign new variables 
\begin{equation}
 \label{change_var}
 \hat{x}_{k} = Q^* x_k.
\end{equation}

The function will look like:

\begin{equation}
f(\hat{x}) = \frac12 \langle W x, x \rangle = \frac12 \langle Q \Lambda Q^* x, x \rangle = \frac12 \langle \Lambda Q^* x , Q^* x \rangle = \frac12 \langle \Lambda\hat{x}, \hat{x}\rangle
\end{equation}

Taking into account, that the gradient is $\nabla f(\hat{x}) = \Lambda \hat{x}$, the method \eqref{oplyap:polyak} will be written as follows
\begin{equation}\label{oplyap:lin}
 \hat{x}_{k+1} = \hat{x}_k - \alpha \Lambda \hat{x}_k + \beta (\hat{x}_k - \hat{x}_{k-1})\qquad
 \hat{x}_{k+1} = (I - \alpha \Lambda + \beta I) \hat{x}_k - \beta \hat{x}_{k-1}.
\end{equation}
We can now use the common reformulation:
\begin{equation}
 \begin{cases}
 \hat{x}_{k+1} &= (I - \alpha \Lambda + \beta I) \hat{x}_k - \beta \hat{x}_{k-1} \\
 \hat{x}_{k} &= \hat{x}_k,
 \end{cases}
\end{equation}

Let’s use the following notation $ \hat{z}_k = \begin{bmatrix} 
 \hat{x}_{k+1} \\
 \hat{x}_{k}
 \end{bmatrix}$. Therefore $\hat{z}_{k+1} = M \hat{z}_k$, where the iteration matrix $M$ is:

\begin{equation}
 M = \begin{bmatrix} 
 I - \alpha \Lambda + \beta I & - \beta I \\
 I & 0_{d}
 \end{bmatrix}
\end{equation}
Note, that $M$ is $2d \times 2d$ matrix with 4 block-diagonal matrices of size $d \times d$ inside. It means, that we can rearrange the order of coordinates to make $M$ block-diagonal in the following form (see Figure~\ref{oplyap:fig:rearrangement}). Note that in the equation below, the matrix $M$ denotes the same as in the notation above, except for the described permutation of rows and columns. We use this slight abuse of notation for the sake of clarity. 
\begin{figure}[h!]
 \centering
 \begin{minipage}{0.35\linewidth}
 \centering
 \includegraphics[width=\textwidth]{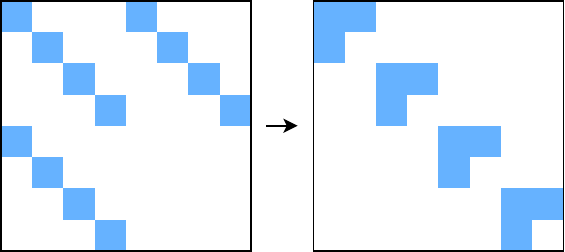}
 \caption{Illustration of matrix $ M $ rearrangement}
 \label{oplyap:fig:rearrangement}
 \end{minipage}
 \hfill
 \begin{minipage}{0.6\linewidth}
 \centering
 \begin{equation}
 \begin{aligned}
 \begin{bmatrix} 
 \hat{x}_{k}^{(1)} \\
 \vdots \\
 \hat{x}_{k}^{(d)} \\
 \addlinespace 
 \hat{x}_{k-1}^{(1)} \\
 \vdots \\
 \hat{x}_{k-1}^{(d)}
 \end{bmatrix} \to 
 \begin{bmatrix} 
 \hat{x}_{k}^{(1)} \\
 \addlinespace 
 \hat{x}_{k-1}^{(1)} \\
 \vdots \\
 \hat{x}_{k}^{(d)} \\
 \addlinespace 
 \hat{x}_{k-1}^{(d)}
 \end{bmatrix} \quad M = \begin{bmatrix}
 M_1\\
 &M_2\\
 &&\ldots\\
 &&&M_d
 \end{bmatrix}
 \end{aligned}
 \label{oplyap:eq:rearrangement}
 \end{equation}
 \end{minipage}
\end{figure}
where $\hat{x}_{k}^{(i)}$ is $i$-th coordinate of vector $\hat{x}_{k} \in \mathbb{R}^d$ and $M_i$ stands for $2 \times 2$ matrix. This rearrangement allows us to study the dynamics of the \eqref{oplyap:polyak} independently for each dimension. One may observe, that the asymptotic convergence rate of the $2d$-dimensional vector sequence of $\hat{z}_k$ is defined by the worst convergence rate among its block of coordinates. Thus, it is enough to study the optimization in a one-dimensional case. For $i$-th coordinate with $\lambda_i$ as an $i$-th eigenvalue of matrix $W$ we have: 
\begin{equation}
 M_i = \begin{bmatrix} 
 1 - \alpha \lambda_i + \beta & -\beta \\
 1 & 0
 \end{bmatrix}
 \label{oplyap:eq:m_i}
\end{equation}
\subsection{Idea}
From this moment we will consider the scalar case. Here we have the problem
\begin{equation}
 f(x) = \frac\lambda2 x^2 \to \min_{x \in \mathbb{R}^1}, \quad \lambda >0
\end{equation}
It is worth mentioning, that we still keep in mind the original problem \eqref{oplyap:eq:original_problem}, when $\lambda$ is some eigenvalue of initial matrix $W$, which means, that $0 < \mu \leq \lambda \leq L$. The iteration takes the form
\begin{equation}
 x_{k+1} = x_k - \alpha \lambda x_k + \beta(x_k - x_{k-1})
 \qquad
 \begin{cases}
 x_{k+1} &= (1 - \alpha \lambda) x_k - \beta x_{k-1} \\
 x_{k} &=x_k ,
 \end{cases}
 \end{equation}
which can be rewritten in the matrix form as
\begin{equation}
 \label{oplyap:scalar_problem}
 z_{k+1} = M z_k, \quad M = \begin{bmatrix} 
 1 - \alpha \lambda + \beta & -\beta \\
 1 & 0
 \end{bmatrix},\quad
 z_k = \begin{bmatrix} 
 x_{k} \\
 x_{k-1}
 \end{bmatrix}
\end{equation}
The method will be convergent if $\rho(M) < 1$, and the optimal parameters can be computed by optimizing the spectral radius \cite{polyak1964some}
\begin{equation}
 \label{oplyap:eq:optimal_hyperpars}
 \alpha^*, \beta^* = \arg \min_{\alpha, \beta} \max_{\lambda \in [\mu, L]} \rho(M) \quad \alpha^* = \dfrac{4}{(\sqrt{L} + \sqrt{\mu})^2}; \quad \beta^* = \left(\dfrac{\sqrt{L} - \sqrt{\mu}}{\sqrt{L} + \sqrt{\mu}}\right)^2
\end{equation}
It can be shown \eqref{oplyap:eq:optimal_eigs} that for such parameters the matrix $M$ has complex eigenvalues, which forms a conjugate pair, so the distance to the optimum (in this case, $\Vert z_k \Vert$), generally, will not go to zero monotonically. 
One can use the machinery of the matrix Lyapunov equation \cite{hammarling1982numerical} to derive the candidates for the Lyapunov function, which will be quadratic. However, there is a simpler way. 
\subsubsection{Lyapunov function formulation}
Consider Schur decomposition of the matrix $M:$
\begin{equation}
 \label{oplyap:eq:schur}
 M = U T U^*; \quad U^*U = I; \quad T = \begin{bmatrix} 
 t_{11} & t_{12} \\
 0 & t_{22}
 \end{bmatrix}
\end{equation}
where $U$ is a unitary matrix, and $T$ is a complex upper triangular matrix. If we manage to find $T$, we will have an easy iteration analysis. From the \eqref{oplyap:scalar_problem}
\begin{equation}
 \label{oplyap:eq:iteration_analysis}
 \begin{split}
 z_{k+1} &= M z_k = U T U^* z_k \\
 U^* z_{k+1}&= T U^* z_k \\
 w_{k+1} &= T w_k,
 \end{split}
\end{equation}
where the substitution $w_k = U^* z_k$ was introduced.
Since $T$ is upper triangular, the last element of the vector $w_k$ is just multiplied by the same number $t_{22}$ at each iteration:
\begin{equation}
 \label{oplyap:eq:upper_triangle_evolution}
 \begin{bmatrix} 
 (w_{k+1})_1 \\
 (w_{k+1})_2
 \end{bmatrix} = \begin{bmatrix} 
 t_{11} & t_{12} \\
 0 & t_{22}
 \end{bmatrix} \begin{bmatrix} 
 (w_{k})_1 \\
 (w_{k})_2
 \end{bmatrix} = \begin{bmatrix} 
 t_{11} & t_{12} \\
 0 & t_{22}
 \end{bmatrix}^k \begin{bmatrix} 
 (w_{0})_1 \\
 (w_{0})_2
 \end{bmatrix} \; \rightarrow \; (w_{k+1})_2 = (t_{22})^k ( w_{0})_2
\end{equation}

In equations \eqref{oplyap:eq:upper_triangle_evolution}, \eqref{oplyap:eq:lyapunov} the term $(m)_i$ denotes the $i$-th coordinate of the vector $m$. In this form, it’s obvious, that if the method converges, the absolute value of $t_{22}$ is lower than $1$. Therefore, we can pick the absolute value of the $w_{k}$ as the Lyapunov function, because it is the converging geometric progression with the convergence rate $\vert t_{22}\vert$. Since the diagonal elements $t_{11}, t_{22}$ are the eigenvalues of the matrix $M$, the rate of convergence is the spectral radius of the iteration matrix, which makes the proposed Lyapunov function asymptotically optimal for the particular method. 
\begin{equation}
 \label{oplyap:eq:lyapunov}
 V(x_{k}, x_{k-1}) = | (w_{k})_2|^2 = | (U^* z_{k})_2|^2 = \left\vert \left(U^*\begin{bmatrix} x_k \\ x_{k-1} \end{bmatrix}\right)_2 \right\vert^2
\end{equation}

\subsubsection{Explicit Schur decomposition of the iteration matrix $M$}

The diagonal elements of the matrix $T$ are the eigenvalues of $M$. At the same time, the first column of the matrix $U$ is the eigenvector of the matrix $M$. Let us obtain the expression for it. It is easy to verify, that the eigenvalues of the iteration matrix are

\begin{equation}
 t_{11}, t_{22} = \lambda^M_1, \lambda^M_2 = \lambda \left( \begin{bmatrix} 
 1 - \alpha \lambda + \beta & -\beta \\
 1 & 0
 \end{bmatrix}\right) = \dfrac{1+\beta - \alpha \lambda \pm \sqrt{(1+\beta - \alpha\lambda)^2 - 4\beta}}{2}
\end{equation}

If we'll use the optimal values $\alpha^*, \beta^*$ from \eqref{oplyap:eq:optimal_hyperpars}, assuming $\mu > 0$ and $L>0$, we have:

\begin{equation}
 \label{oplyap:eq:optimal_eigs}
 t_{11}, t_{22} = \dfrac{\mu + L - 2\lambda \pm 2\sqrt{(L - \lambda)(\mu - \lambda)}}{(\sqrt{L} + \sqrt{\mu})^2}
\end{equation}

Let us also verify, that the vector $ (\lambda^M \; 1)^T$ will be the (unnormalized) eigenvector for iteration matrix $M$. Here we denote $\lambda^M$ as any eigenvalue of matrix $M$ (either $\lambda^M_1$ or $\lambda^M_2$). We have
\begin{equation*}
 \begin{bmatrix}
 1 - \alpha \lambda + \beta & -\beta \\
 1 & 0
 \end{bmatrix} 
\begin{bmatrix}
 \lambda^M \\ 
 1
 \end{bmatrix} = \lambda^M\begin{bmatrix}
 \lambda^M \\ 
 1
 \end{bmatrix} \to \lambda^M = \dfrac{1+\beta - \alpha \lambda \pm \sqrt{(1+\beta - \alpha\lambda)^2 - 4\beta}}{2}
\end{equation*}

To build matrix $U$ from \eqref{oplyap:eq:schur} we can take vector $u_1$ as an eigenvector of $M$:

\begin{equation}
 U = \begin{bmatrix}
 u_1 \; u_2
 \end{bmatrix}
 \quad
 u_1~=~\dfrac{1}{\sqrt{1 + (\lambda^M)^*\lambda^M}}\begin{bmatrix}
 \lambda^M \\ 1
 \end{bmatrix},
\end{equation}
while the second vector $u_2$ can be taken as an orthogonal vector as $ u_2 \sim \begin{bmatrix}
 1 \\ -(\lambda^M)^*
\end{bmatrix}$. Thus, we can write down the matrix $U$:

\begin{equation}
 U = \dfrac{1}{\sqrt{1 + (\lambda^M)^*\lambda^M}}\begin{bmatrix} \lambda^M & 1 \\ 1 & -(\lambda^M)^* \end{bmatrix}
\end{equation}

Note, that if the eigenvalues $\lambda_1^M, \lambda_2^M$ is not a conjugate pair, we couldn't write down the expression \eqref{oplyap:eq:schur_decomposition_ex}. The term \textit{conjugate pair} refers to either complex eigenvalues, which satisfy $(\lambda_1^M)^* = \lambda_2^M$ or real equal eigenvalues $\lambda_1^M = \lambda_2^M$. Taking into account, that eigenvalues of $M$ is a conjugate pair, the iteration matrix will take the form

\begin{equation}
 \label{oplyap:eq:schur_decomposition_ex}
 M = 
\underbrace{\dfrac{1}{\sqrt{1 + \lambda^M_1\lambda^M_2}}\begin{bmatrix} \lambda^M_1 & 1 \\ 1 & -\lambda^M_2 \end{bmatrix} }_{U}
\underbrace{\begin{bmatrix} \lambda^M_1 & * \\ 0 & \lambda^M_2 \end{bmatrix}\vphantom{\dfrac{1}{\sqrt{1 + \lambda^M_1\lambda^M_2}}}}_T
\underbrace{\dfrac{1}{\sqrt{1 + \lambda^M_1\lambda^M_2}}\begin{bmatrix} \lambda^M_2 & 1 \\ 1 & -\lambda^M_1 \end{bmatrix}}_{U^*}
\end{equation}

The diagonal elements of the matrix $T$ are the eigenvalues of $M$, so $t_{22} = \lambda^M$. Returning to \eqref{oplyap:eq:lyapunov}, we have:
\begin{equation}
 \begin{split}
 V(x_{k}, x_{k-1}) &= \left\vert \dfrac{1}{\sqrt{1 + \lambda^M_1\lambda^M_2}}\begin{bmatrix} 1 & -\lambda^M_1 \end{bmatrix} \begin{bmatrix} x_k \\ x_{k-1} \end{bmatrix} \right\vert^2 \\
 &= \dfrac{1}{1 + \lambda^M_1\lambda^M_2} \left\vert x_k - \lambda^M_1 x_{k-1}\right\vert^2 \\
 &= \dfrac{1}{1 + \lambda^M_1\lambda^M_2} \left(\text{Re}^2(x_k - \lambda^M_1 x_{k-1}) + \text{Im}^2(x_k - \lambda^M_1x_{k-1}) \right) \\
 &= \dfrac{1}{1 + \lambda^M_1\lambda^M_2} \left(\left(x_k - \text{Re}(\lambda^M_1) x_{k-1}\right)^2 + \left(\text{Im}( \lambda^M_1) x_{k-1}\right) ^2\right) \\
 &= \dfrac{1}{1 + \lambda^M_1\lambda^M_2} \left(x_k^2 - 2 \text{Re}(\lambda^M_1) x_k x_{k-1} + \text{Re}^2(\lambda^M_1)x_{k-1}^2 + \text{Im}^2( \lambda^M_1) x_{k-1}^2\right) \\
 &= \dfrac{1}{1 + \lambda^M_1\lambda^M_2} \left(x_k^2 - 2 \text{Re}(\lambda^M_1) x_k x_{k-1} + \vert \lambda^M_1\vert^2 x_{k-1}^2\right) \\
 \end{split}
\end{equation}

\subsubsection{Optimal hyperparameters for the method and the spectrum of the iteration matrix}

Now we’ll consider the eigenvalues of the iteration matrix $M$. We’ll start with the optimal hyperparameters $\alpha^*, \beta^*$ from \eqref{oplyap:eq:optimal_hyperpars}:

\begin{equation}
 \text{Re}(\lambda^M_1) = \dfrac{L + \mu - 2\lambda}{(\sqrt{L} + \sqrt{\mu})^2}; \quad \text{Im}(\lambda^M_1) = \dfrac{\pm 2\sqrt{(L - \lambda)(\lambda - \mu)}}{(\sqrt{L} + \sqrt{\mu})^2}; 
\end{equation}

\begin{equation}
 \vert \lambda \vert = \dfrac{L - \mu}{(\sqrt{L} + \sqrt{\mu})^2} \quad \vert \lambda \vert^2 = \dfrac{(L - \mu)^2}{(\sqrt{L} + \sqrt{\mu})^4}
\end{equation}

\begin{equation}
 \begin{split}
 V(x_k, x_{k-1})&= \dfrac{1}{1 + \lambda^M_1\lambda^M_2} \left(x_k^2 - 2 \dfrac{L + \mu - 2\lambda}{(\sqrt{L} + \sqrt{\mu})^2} x_k x_{k-1} + \dfrac{(L - \mu)^2}{(\sqrt{L} + \sqrt{\mu})^4}x_{k-1}^2\right) \\
 &= \dfrac{1}{1 + \lambda^M_1\lambda^M_2} \left( \left( x_k - 2 \dfrac{L + \mu - 2\lambda}{(\sqrt{L} + \sqrt{\mu})^2} x_{k-1}\right)x_k + \dfrac{(L - \mu)^2}{(\sqrt{L} + \sqrt{\mu})^4}x_{k-1}^2\right) \\
 &= \dfrac{1}{1 + \lambda^M_1\lambda^M_2} \left( -\dfrac{(\sqrt{L} - \sqrt{\mu})^2}{(\sqrt{L} + \sqrt{\mu})^2}x_{k-2}x_k + \dfrac{(\sqrt{L} + \sqrt{\mu})^2(\sqrt{L} - \sqrt{\mu})^2}{(\sqrt{L} + \sqrt{\mu})^4}x_{k-1}^2\right) \\
 &= \dfrac{1}{1 + \lambda^M_1\lambda^M_2} \dfrac{(\sqrt{L} - \sqrt{\mu})^2}{(\sqrt{L} + \sqrt{\mu})^2} \left( x_{k-1}^2 - x_k x_{k-2}\right) \\
 \end{split}
\end{equation}
Overall, we have the simpler formula for the Lyapunov function in this setting:

\begin{equation}
 \label{oplyap:eq:basest}
 V(x_k, x_{k-1}) = x_{k-1}^2 - x_k x_{k-2}
\end{equation}

Let us highlight what was shown at the moment. We considered the problem \eqref{oplyap:eq:original_problem} and the heavy ball method \eqref{oplyap:polyak} applied to it. It was shown, that the dynamics happen independently on each coordinate with its iteration matrix $M_i$ (which was denoted in this section as $M$ for simplicity) \eqref{oplyap:eq:m_i} for each dimension. Then, we brought the iteration matrix $M_i$ to the upper triangular form using Schur decomposition. We demonstrated, that for each coordinate the proposed expression \eqref{oplyap:eq:basest} is monotonically decreasing during the iteration procedure under the condition, that $\lambda_1^M$ and $\lambda_2^M$ are conjugate pair. 

It can be shown (see Figure~\ref{oplyap:fig:non_optimal}), that for \textbf{HB}, \textbf{NAG} and \textbf{NAG-GS} with optimal parameters we have a spectrum, where for each dimension eigenvalues are conjugate pairs, while for the \textbf{TMM} this is not true, which means, that generally there is no guarantee, that proposed function \eqref{oplyap:eq:basest} will serve as a Lyapunov function for this method. However, occasionally, sometimes it works even in this case, but we will show experiments, where $V(x_k, x_{k-1}, x_{k-2})$ will not monotonically decrease for \textbf{TMM} even in quadratic case.

However, optimal hyperparameter setting requires the knowledge of $\mu$ and $L$, which is not always possible in practice. Therefore, it is even more interesting, that we can derive convergence properties straightforward from the spectrum of the iteration matrix and verify, that some set of hyperparameters ensures proposed $V(x_k, x_{k-1}, x_{k-2})$ to be the Lyapunov function (see Figure~\ref{oplyap:fig:non_optimal}).

\begin{table}[h!]
 \centering
 \resizebox{\textwidth}{!}{\begin{tabular}{@{}ccccc@{}}
 \toprule
 Method & $a$ & $b$ & $\rho(M)$ & $\lambda_1^M$ and $\lambda_2^M$ are the conjugate pairs if \\ 
 \midrule
 \begin{tabular}[c]{@{}c@{}}\textbf{HB} \cite{polyak1964some}\\ with $\alpha^\star, \beta^\star$\end{tabular} & $\dfrac{2(L + \mu - 2\lambda)}{(\sqrt{L} + \sqrt{\mu})^2}$ & $-\dfrac{(\sqrt{L} - \sqrt{\mu})^2}{(\sqrt{L} + \sqrt{\mu})^2}$ & $\dfrac{\sqrt{L} - \sqrt{\mu}}{\sqrt{L} + \sqrt{\mu}}$ & always \\
 \begin{tabular}[c]{@{}c@{}}\textbf{NAG}\cite{nesterov2018lectures}\\ with $\alpha^\star, \beta^\star$\end{tabular} & $\dfrac{2\sqrt{L}(L-\lambda)}{(\sqrt{L} + \sqrt{\mu})L}$ & $-\dfrac{(\sqrt{L} - \sqrt{\mu})(L-\lambda)}{(\sqrt{L} + \sqrt{\mu})L}$ & $1 - \sqrt{\dfrac{\mu}{L}}$ & always \\
 \begin{tabular}[c]{@{}c@{}}\textbf{TMM}\cite{van2017fastest}\\ with $\alpha^\star, \beta^\star,\gamma^\star$\end{tabular} & $\frac{2L + \mu - 3 \lambda - (L-\lambda) \sqrt{\frac{\mu}{L}}}{L\left(1 + \sqrt{\frac{\mu}{L}}\right)}$ & $-\dfrac{(\sqrt{L} - \sqrt{\mu})^2(L - \lambda)}{(\sqrt{L} + \sqrt{\mu})L}$ & $\left(1 - \sqrt{\dfrac{\mu}{L}}\right)^{\frac32}$ & not $\forall \lambda$ \\
 \begin{tabular}[c]{@{}c@{}}\textbf{NAG-GS}\cite{leplat2022nag}\\ with $\alpha^\star, \beta^\star$\end{tabular} & $\frac{4 \left( \mu + \sqrt{\mu L} \right) \left( -\lambda \left( \sqrt{\frac{L}{\mu}} + 1 \right) + \sqrt{\mu L} + L \right)}{\left( \mu + 2 \sqrt{\mu L} + L \right)^2}$ & $-\frac{(L - \mu)^2}{(L + \mu + 2 \sqrt{\mu L})^2}$ & $\frac{L - \mu}{L + \mu + 2 \sqrt{\mu L}}$ & always \\
 \bottomrule
 \end{tabular}}
 \caption{Reformulation of first order methods with optimal parameters in the format given in theorem \ref{oplyap:th:scalar_lyapunov}}
\end{table}

\section{Lyapunov function for first-order methods for quadratic function}

\subsection{Scalar case}
As soon as we formulate the result for the Heavy Ball method with optimal hyperparameters, we can generalize the idea to an arbitrary two-step method, which is convergent and has a conjugate pair of eigenvalues of the iteration matrix. This result is formulated in the following theorem.

\begin{theorem}
 \label{oplyap:th:scalar_lyapunov}
 For the quadratic optimization problem in the form of
 \[
 f(x) = \frac\lambda2 x^2 \to \min_{x \in \mathbb{R}^1}, \quad \lambda >0
 \]
 Given any convergent optimization method, which could be written in the following form
 \begin{align}
 \label{oplyap:eq:two_step_scalar}
 x_{k+1} = a x_k + b x_{k-1}
 \end{align}
 , where $a^2 + 4b \leq 0$ it has the following Lyapunov function:
 \[
 V(x_k, x_{k-1}, x_{k-2}) = x_{k-1}^2 - x_k x_{k-2} 
 \]
\end{theorem}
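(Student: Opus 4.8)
The plan is to reduce the claimed three-argument function to the two-argument Schur-based Lyapunov function already constructed in Section~\ref{oplyap:sec:reduction_to_a_scalar}, and then inherit its monotone decay. First I would write the scheme \eqref{oplyap:eq:two_step_scalar} as the linear system $z_{k+1} = M z_k$ with the companion matrix $M = \begin{bmatrix} a & b \\ 1 & 0 \end{bmatrix}$ and $z_k = (x_k, x_{k-1})^\top$, exactly as in \eqref{oplyap:scalar_problem} but with general $a, b$ in place of the Heavy Ball coefficients $a = 1 - \alpha\lambda + \beta$, $b = -\beta$. Its eigenvalues are $\lambda_{1,2}^M = \tfrac12\bigl(a \pm \sqrt{a^2 + 4b}\,\bigr)$, so the hypothesis $a^2 + 4b \le 0$ is precisely the condition that $\lambda_1^M, \lambda_2^M$ form a conjugate pair, which is what the Schur construction \eqref{oplyap:eq:schur_decomposition_ex} requires. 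From the trace and determinant of $M$ I would record the two relations I need: $2\,\mathrm{Re}(\lambda_1^M) = \lambda_1^M + \lambda_2^M = a$ and $|\lambda_1^M|^2 = \lambda_1^M\lambda_2^M = -b$, where the last equality uses that the pair is conjugate.

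Next I would substitute these relations into the explicit quadratic form $\tilde V(x_k,x_{k-1}) = |(U^* z_k)_2|^2$ derived just after \eqref{oplyap:eq:schur_decomposition_ex}, namely $\tilde V(x_k,x_{k-1}) = \tfrac{1}{1+\lambda_1^M\lambda_2^M}\bigl(x_k^2 - 2\,\mathrm{Re}(\lambda_1^M)\,x_k x_{k-1} + |\lambda_1^M|^2 x_{k-1}^2\bigr)$, obtaining the purely real expression $\tilde V(x_k,x_{k-1}) = \tfrac{1}{1-b}\bigl(x_k^2 - a\, x_k x_{k-1} - b\, x_{k-1}^2\bigr)$. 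By the iteration analysis \eqref{oplyap:eq:iteration_analysis}--\eqref{oplyap:eq:upper_triangle_evolution}, this $\tilde V$ is a genuine Lyapunov function: it is nonnegative and satisfies $\tilde V(x_{k+1},x_k) = |t_{22}|^2\,\tilde V(x_k,x_{k-1}) = (-b)\,\tilde V(x_k,x_{k-1})$ with decay factor $|t_{22}|^2 = |\lambda^M|^2 = -b < 1$ guaranteed by convergence $\rho(M) < 1$.

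The key step is then to connect $\tilde V$ to the claimed function by using the recurrence itself to eliminate the newest iterate. Substituting $x_k = a\, x_{k-1} + b\, x_{k-2}$ into $x_{k-1}^2 - x_k x_{k-2}$ gives $x_{k-1}^2 - a\, x_{k-1} x_{k-2} - b\, x_{k-2}^2$, which is exactly $(1-b)\,\tilde V(x_{k-1},x_{k-2})$. Since $a^2 + 4b \le 0$ forces $b \le -a^2/4 \le 0$, the factor $1 - b \ge 1 > 0$ is a strictly positive constant, so $V(x_k,x_{k-1},x_{k-2}) = x_{k-1}^2 - x_k x_{k-2}$ inherits nonnegativity and monotone decay from $\tilde V$; indeed $V$ obeys the same scalar recursion $V_{k+1} = (-b)\,V_k$, which closes the argument.

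The main obstacle is bookkeeping rather than conceptual: I must verify that the conjugate-pair hypothesis $a^2 + 4b \le 0$ is exactly what makes $|\lambda_1^M|^2 = -b$ and $2\,\mathrm{Re}(\lambda_1^M) = a$ genuinely real, and that the normalizing denominator $1 + \lambda_1^M\lambda_2^M = 1 - b$ stays strictly positive so that the division and the final proportionality are legitimate. The one genuinely new idea to highlight is the recurrence substitution that trades the two-point Schur form $\tilde V(x_{k-1},x_{k-2})$ for the symmetric three-point form $x_{k-1}^2 - x_k x_{k-2}$; everything else is inherited from the Heavy Ball computation, now carried out with general $(a,b)$.
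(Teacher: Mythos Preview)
Your proposal is correct and follows essentially the same route as the paper's own proof: write the two-step recurrence as $z_{k+1}=Mz_k$ with the companion matrix, use the Schur decomposition and the conjugate-pair hypothesis to obtain the quadratic form $x_k^2 - a\,x_kx_{k-1} - b\,x_{k-1}^2$, and then apply the recurrence $x_k = a\,x_{k-1} + b\,x_{k-2}$ to collapse it to $x_{k-1}^2 - x_kx_{k-2}$ up to a positive constant. The only cosmetic difference is that you substitute the recurrence into $x_{k-1}^2 - x_kx_{k-2}$ and identify the result with $(1-b)\tilde V(x_{k-1},x_{k-2})$, whereas the paper substitutes into $\tilde V(x_k,x_{k-1})$ and identifies the result with $-b\bigl(x_{k-1}^2 - x_kx_{k-2}\bigr)$; both yield the same geometric decay $V_{k+1}=(-b)V_k$, and your explicit check that $1-b>0$ is a tidy addition.
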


\begin{table}[t!]
 \centering
 \begin{tabular}{@{}cc@{}}
 \toprule
 Method & Iteration \\ \midrule
 \textbf{HB} \cite{polyak1964some} & \begin{tabular}{@{}c@{}}
 $x_{k+1} = x_k - \alpha\nabla f(x_k) + \beta(x_k - x_{k-1})$ \\
 $\mathbf{x_{k+1}} = (1+\beta - \alpha\lambda)\mathbf{x_k} - \beta \mathbf{x_{k-1}}$
 \end{tabular} \\ \midrule
 \textbf{NAG}\cite{nesterov1983method} & \begin{tabular}{@{}c@{}}
 $\begin{cases}y_{k+1} = x_k + \beta (x_k - x_{k-1}) \\ x_{k+1} = y_{k+1} - \alpha \nabla f(y_{k+1}) \end{cases}$ \\
 $\mathbf{x_{k+1}} = (1+\beta)\left(1 - \alpha \lambda\right)\mathbf{x_k} - \beta\left(1 - \alpha \lambda\right)\mathbf{x_{k-1}}$
 \end{tabular} \\ \midrule
 \textbf{TMM}\cite{van2017fastest} & \begin{tabular}{@{}c@{}}
 $x_{k+1} = (1+\beta)x_k - \beta x_{k-1} -\alpha \nabla f\left( (1 + \gamma)x_k - \gamma x_{k-1}\right)$ \\
 $\mathbf{x_{k+1}} = (1+ \beta - \alpha (1 + \gamma) \lambda)\mathbf{x_k} + \left(\alpha \gamma \lambda - \beta \right)\mathbf{x_{k-1}} $
 \end{tabular} \\ \midrule
 \textbf{NAG-GS}\cite{leplat2022nag} & \begin{tabular}{@{}c@{}}
 $\begin{cases}y_{k} = \beta y_{k-1} + (1-\beta)x_k - \alpha \nabla f(x_k)\\ x_{k+1} = \beta x_{k} + (1-\beta)y_k \end{cases}$ \\
 $\mathbf{x_{k+1}} = \left((2 \beta + (1-\beta)^2) - \alpha (1-\beta) \lambda\right) \mathbf{x_k} - \beta^2 \mathbf{x_{k-1}}$
 \end{tabular} \\ \bottomrule
 \end{tabular}
 \caption{Correspondence between accelerated first-order methods and two-step notation given in \eqref{oplyap:eq:two_step_scalar}. Notation $\mathbf{x_k}$ made only for the sake of clarity.}
 \label{oplyap:tab:two_step_methods}
\end{table}

\begin{proof}
\begin{enumerate}
 \item Clearly, the method could be written in the form:
 $$
 \begin{bmatrix} 
 x_{k+1} \\
 x_k 
 \end{bmatrix} = M \begin{bmatrix} 
 x_{k} \\
 x_{k-1}
 \end{bmatrix}, \qquad
 M = \begin{bmatrix} 
 a & b \\
 1 & 0
 \end{bmatrix}
 $$
 While the eigenpairs are $\lambda^M = \lambda^M_1, \lambda^M_2$ or $\lambda^M = \dfrac{a \pm \sqrt{a^2 + 4b}}{2}$ and $v^M = \begin{bmatrix} 
 \lambda^M \\
 1
 \end{bmatrix}$
 \item Thus, we can explicitly construct Schur decomposition:
 \begin{equation}
 M = 
 \underbrace{\dfrac{1}{\sqrt{1 + \left(\lambda^M_1\right)^*\lambda^M_1}}\begin{bmatrix} \lambda^M_1 & 1 \\ 1 & -\left(\lambda^M_1\right)^* \end{bmatrix} }_{U}
 \underbrace{\begin{bmatrix} \lambda^M_1 & * \\ 0 & \lambda^M_2 \end{bmatrix}\vphantom{\dfrac{1}{\sqrt{1 + \lambda^M_1\lambda^M_2}}}}_T
 \underbrace{\dfrac{1}{\sqrt{1 + \left(\lambda^M_1\right)^*\lambda^M_1}}\begin{bmatrix} \left(\lambda^M_1\right)^* & 1 \\ 1 & -\lambda^M_1 \end{bmatrix}}_{U^*}
 \end{equation}
    
 Taking into account convergence condition: $\rho(M) < 1$, i.e. $\max\left(|\lambda^M|\right)~<~1$ and the imaginary spectrum of the iteration matrix $a^2 + 4b \leq 0$, we will write down explicitly:
 \begin{itemize}
 \item $\left(\lambda^M_1\right)^* = \lambda^M_2$
 \item $|\lambda^M| = \sqrt{-b}$
 \item $|\lambda^M|^2 = -b$
 \item $\rho(M) = \sqrt{-b}$
 \item $\text{Re}(\lambda^M) = \frac{a}{2}$
 \item $\text{Im}(\lambda^M) = \frac{\sqrt{-a^2 - 4b}}{2}$
 \item Similarly to \eqref{oplyap:eq:upper_triangle_evolution} we can say, that in the new variables ($w_k$), the absolute value of the last coordinate will monotonically decrease. Using the reformulation $z_{k+1} = M z_k \; w_k = U^* z_k; \; w_{k+1} = T w_k$ and taking into account, that $\left(\lambda_1^M\right)^* = \lambda_2^M$ the Lyapunov function will take the following form:
 \begin{equation}
 \begin{split}
 V(\cdot) &= | (w_{k})_2|^2 = | (U^* z_{k})_2|^2 = \left\vert \left(U^*\begin{bmatrix} x_k \\ x_{k-1} \end{bmatrix}\right)_2 \right\vert^2 \\
 &=\dfrac{1}{1 + \lambda^M_1\lambda^M_2} \left\vert x_k - \lambda^M_1 x_{k-1}\right\vert^2 \\
 &\simeq \text{Re}^2(x_k - \lambda^M_1 x_{k-1}) + \text{Im}^2(x_k - \lambda^M_1x_{k-1}) \\
 &= \left(x_k - \text{Re}(\lambda^M_1) x_{k-1}\right)^2 + \left(\text{Im}( \lambda^M_1) x_{k-1}\right) ^2 \\
 &= x_k^2 - 2 \text{Re}(\lambda^M_1) x_k x_{k-1} + \text{Re}^2(\lambda^M_1)x_{k-1}^2 + \text{Im}^2( \lambda^M_1) x_{k-1}^2 \\
 &= x_k^2 - 2 \text{Re}(\lambda^M_1) x_k x_{k-1} + \vert \lambda^M_1\vert^2 x_{k-1}^2
 \end{split}
 \end{equation}
 \item As soon as $x_k = a x_{k-1} + b x_{k-2}$, we can write the Lyapunov function \begin{equation}
 \begin{split}
 V(\cdot) &= x_k^2 - 2 \text{Re}(\lambda^M_1) x_k x_{k-1} + \vert \lambda^M_1\vert^2 x_{k-1}^2 \\
 &= x_k \left( x_k - a x_{k-1}\right) + \vert \lambda^M_1\vert^2 x_{k-1}^2\\
 &= x_k b x_{k-2} -b x_{k-1}^2 \\
 &\simeq x_{k-1}^2 - x_k x_{k-2}
 \end{split}
 \end{equation}

 We used the $\simeq$ symbol above to denote equivalence from the Lyapunov function point of view between function $V(\cdot)$ and $aV(\cdot)$, $a > 0$. So, $V(\cdot) \simeq aV(\cdot)$
 \end{itemize} 
\end{enumerate} 
\end{proof}

It is interesting, that the expression above serves as the Lyapunov function for the very wide class of methods. Several methods, which allow two-step reformulation like in \eqref{oplyap:eq:two_step_scalar} are presented in Table \ref{oplyap:tab:two_step_methods}

\begin{table}[t]
 \centering
 \resizebox{\textwidth}{!}{
 \begin{tabular}{@{}cccc@{}}
 \toprule
 Method & $a$ & $b$ & $\rho(M)$ if $\lambda_1^M$ and $\lambda_2^M$ are conjugate pairs \\ 
 \midrule
 \begin{tabular}[c]{@{}c@{}}\textbf{HB} \cite{polyak1964some}\\ with $\alpha, \beta$\end{tabular} & $1 - \alpha\lambda + \beta$ & $-\beta$ & $\sqrt{\beta}$ \\
 \begin{tabular}[c]{@{}c@{}}\textbf{NAG}\cite{nesterov2018lectures}\\ with $\alpha, \beta$\end{tabular} & $(1 - \alpha \lambda)(1 + \beta)$ & $- (1 - \alpha \lambda)\beta$ & $\sqrt{(1 - \alpha \mu)\beta}$ \\
 \begin{tabular}[c]{@{}c@{}}\textbf{TMM}\cite{van2017fastest}\\ with $\alpha, \beta,\gamma$\end{tabular} & $(1+ \beta - \alpha (1 + \gamma) \lambda)$ & $\left(\alpha \gamma \lambda - \beta \right)$ & $\sqrt{\beta - \alpha\gamma\mu}$ \\
 \begin{tabular}[c]{@{}c@{}}\textbf{NAG-GS}\cite{leplat2022nag}\\ with $\alpha, \beta$\end{tabular} & $2\beta + (1-\beta)^2 - \alpha(1-\beta) \lambda$ & $- \beta^2$ & $\beta$ \\
 \bottomrule
 \end{tabular}
 }
 \caption{Reformulation of first-order methods with general parameters in the format given in theorem \ref{oplyap:th:scalar_lyapunov}}
 \label{oplyap:tab:two_step_scalar}
\end{table}

Therefore, we can study the hyperparameters of methods, presented in Table \ref{oplyap:tab:two_step_methods} for the meeting requirements of Theorem \eqref{oplyap:th:scalar_lyapunov}. Studying the specific requirements on the hyperparameters $\alpha, \beta, \gamma$ is of great interest and is the question of further research. 
Block matrix formulation for the vector version of the methods from Table \ref{oplyap:tab:two_step_scalar} is presented in Appendix \ref{oplyap:sec:block_matrix_formulation}.

\subsection{General $d$-dimensional case}

\begin{theorem}
 \label{oplyap:th:main_theorem}
 For the quadratic optimization problem in the form of \eqref{problem}:
 \begin{align}
 \label{oplyap:eq:full_problem}
 \min\limits_{x \in \mathbb{R}^d} f(x) = \min\limits_{x \in \mathbb{R}^d} \dfrac{1}{2} x^T W x - b^T x + c, \text{ where }W \in \mathbb{S}^d_{++}
 \end{align}
 with a unique solution $x^\star = W^{-1}b$, given any optimization method, which converges to $x^\star$ and could be written in the following form
 \[
 x_{k+1} = A x_k + B x_{k-1},
 \]
 where $A, B \in \mathbb{R}^{d \times d}$ are diagonal matrices, or, equivalently:
 \[
 z_{k+1} = M z_k, \quad M = \begin{bmatrix} 
 A & B \\
 I & 0_{d}
 \end{bmatrix} \quad 
 z_k = \begin{bmatrix} 
 x_{k} \\
 x_{k-1}
 \end{bmatrix}
 \]
 where the eigenvalues of the iteration matrix for each dimension (see the corresponding rearrangement on Figure~\ref{oplyap:fig:rearrangement}) $M_i$ forms the conjugate pairs, i.e. $(\lambda_1^{M_i})^* = \lambda_2^{M_i} = \lambda^{M_i} \; \forall i \in 1,\ldots, d$ has the following Lyapunov function:
 \begin{equation}
 \label{oplyap:eq:lyapunov_function}
 V(x_k, x_{k-1}, x_{k-2}) = \Vert x_{k-1} - x^*\Vert ^2 - \langle x_k - x^*, x_{k-2} - x^* \rangle
 \end{equation}
\end{theorem}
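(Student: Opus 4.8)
The plan is to reduce the $d$-dimensional claim to $d$ independent applications of the scalar Theorem~\ref{oplyap:th:scalar_lyapunov} and then recombine the pieces by summation. First I would strip off the affine part of the problem by shifting the iterates: set $y_k = x_k - x^\star$. Since $x^\star = W^{-1}b$ is the unique stationary point of the dynamics (the gradient $\nabla f(x^\star)=Wx^\star-b$ vanishes, so the method leaves $x^\star$ invariant), one checks that the inhomogeneous recurrence for $x_k$ becomes exactly homogeneous in the shifted variable, $y_{k+1} = A y_k + B y_{k-1}$, while the candidate function rewrites as $V = \Vert y_{k-1}\Vert^2 - \langle y_k, y_{k-2}\rangle$.

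The second step is decoupling. Because the proposed $V$ is assembled only from the Euclidean norm and the Euclidean inner product, it is invariant under any orthogonal change of coordinates; passing to the eigenbasis of $W$ via the orthogonal transform $\hat y_k = Q^* y_k$ of \eqref{change_var} therefore leaves $V$ unchanged while simultaneously diagonalizing $A$ and $B$ (for the methods of interest $A$ and $B$ are polynomials in $W$, hence diagonal in this basis). In these coordinates the recurrence splits, precisely as in the rearrangement of Figure~\ref{oplyap:fig:rearrangement}, into $d$ scalar two-step recurrences $\hat y_{k+1}^{(i)} = a_i \hat y_k^{(i)} + b_i \hat y_{k-1}^{(i)}$ with $a_i = A_{ii}$ and $b_i = B_{ii}$. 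Expanding the norm and the inner product coordinatewise yields the identity $V = \sum_{i=1}^{d}\big[(\hat y_{k-1}^{(i)})^2 - \hat y_k^{(i)}\hat y_{k-2}^{(i)}\big]$, i.e. $V$ is exactly the sum over dimensions of the scalar Lyapunov functions \eqref{oplyap:eq:basest}.

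Third, I would invoke Theorem~\ref{oplyap:th:scalar_lyapunov} block by block. The hypothesis that the eigenvalues of each $M_i$ form a conjugate pair is exactly the scalar condition $a_i^2 + 4 b_i \le 0$, and convergence of the method forces $\rho(M_i) = \sqrt{-b_i} < 1$ for every $i$. Hence each summand is a strictly positive multiple of $|(w_k^{(i)})_2|^2$, the squared modulus of the last Schur coordinate in dimension $i$, so it is nonnegative and decays geometrically with ratio $\rho(M_i)^2 < 1$. A finite sum of nonnegative, monotonically decreasing sequences that each tend to zero is itself nonnegative, monotonically decreasing, and convergent to zero, which is exactly what is required of a Lyapunov function; this completes the argument.

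The only genuinely delicate point is the decoupling: one must justify that the orthogonal passage to the eigenbasis of $W$ both preserves $V$ and renders $A$ and $B$ simultaneously diagonal, so that the scalar result can be applied dimension by dimension with no cross-terms surviving in the norm or the inner product. Once that invariance is established, the remainder is bookkeeping --- translating the per-block conjugate-pair hypothesis into $a_i^2 + 4 b_i \le 0$ and observing that nonnegativity and monotone decrease are both preserved under a finite sum.
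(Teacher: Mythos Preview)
Your proposal is correct and follows essentially the same route as the paper: shift by $x^\star$, pass to the eigenbasis of $W$ to decouple the recurrence into $d$ independent scalar two-step iterations, apply Theorem~\ref{oplyap:th:scalar_lyapunov} coordinatewise, and then sum and undo the orthogonal change of variables using $QQ^*=I$. If anything you are more explicit than the paper on two points the authors leave implicit --- the orthogonal invariance of $\Vert\cdot\Vert^2$ and $\langle\cdot,\cdot\rangle$ that makes the last step work, and the observation that a finite sum of nonnegative, geometrically decaying terms is itself nonnegative and monotone.
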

\begin{proof}
 \begin{enumerate}
 \item It is enough to observe, that we can easily change variables with the help of eigendecomposition $W = Q \Lambda Q^\star$ similarly as it was done in \eqref{change_var} 
 \[
 \hat{x}_{k} = Q^*(x_k - x^\star) \; \text{or} \; x_k = Q \hat{x_k} + x^\star
 \]

 Thus, our function became quadratic form with a diagonal matrix with the minimum at $\hat{x} = 0$:

 \[
 f(\hat{x}) = \frac12 \langle \hat{x}, \Lambda \hat{x}\rangle - \frac12\langle b, A^{-1}b\rangle + c
 \]

 \item Due to the possible rearrangement of matrix (Figure~\ref{oplyap:fig:rearrangement} and \eqref{oplyap:eq:rearrangement}) and, therefore, independent coordinate-wise dynamics with matrix $M_i$.
 \begin{equation*}
 M_i = \begin{bmatrix} 
 a_i & b_i \\
 1 & 0
 \end{bmatrix},
 \end{equation*}
 where $a_i, b_i$ are the $i$-th diagonal elements of matrices $A$ and $B$ we can write down the Lyapunov function for each dimension of the vector $\hat{x}$. It follows from the Theorem \ref{oplyap:th:scalar_lyapunov} that for each dimension of the vector $\hat{x} \in \mathbb{R}^d$ one can write the Lyapunov function, which will decrease monotonically if $\rho(M_i)$ and $(\lambda_1^{M_i})^* = \lambda_2^{M_i}$:

 \begin{equation*}
 V^i(\hat{x}^i_k, \hat{x}^i_{k-1}, \hat{x}^i_{k-2}) = \left(\hat{x}^i_{k-1}\right)^2 - \hat{x}^i_k \hat{x}^i_{k-2}
 \end{equation*}

 \item Now we can sum all the Lyapunov functions over dimensions $\forall i \in 1, \ldots, d$:
 
 \begin{equation*}
 \begin{split}
 V(\hat{x}_k, \hat{x}_{k-1}, \hat{x}_{k-2}) &= \sum_{i=1}^d V^i(\hat{x}^i_k, \hat{x}^i_{k-1}, \hat{x}^i_{k-2}) \\
 &= \sum_{i=1}^d \left( \left(\hat{x}^i_{k-1}\right)^2 - \hat{x}^i_k \hat{x}^i_{k-2}\right) \\
 &= \Vert \hat{x}_{k-1}\Vert ^2 -(\hat{x}_k)^T(\hat{x}_{k-2})
 \end{split}
 \end{equation*}

 \item Switching back to the original variables with $ \hat{x}_{k} = Q^*(x_k - x^\star)$
 
 \begin{equation*}
 \begin{split}
 V(x_k, x_{k-1}, x_{k-2}) &= \Vert Q^*(x_{k-1} - x^\star)\Vert ^2 -(Q^*(x_k - x^\star))^T(Q(x_{k-2} - x^\star)) \\
 &= (x_{k-1} - x^\star)^T QQ^* (x_{k-1} - x^\star) - ((x_k - x^\star))^TQQ^*((x_{k-2} - x^\star)) \\
 &= \Vert x_{k-1} - x^\star\Vert ^2 - \langle x_k - x^\star, x_{k-2} - x^\star\rangle
 \end{split}
 \end{equation*}
 \end{enumerate}
\end{proof}

\begin{figure}[h!]
 \centering
 \includegraphics[width=\linewidth]{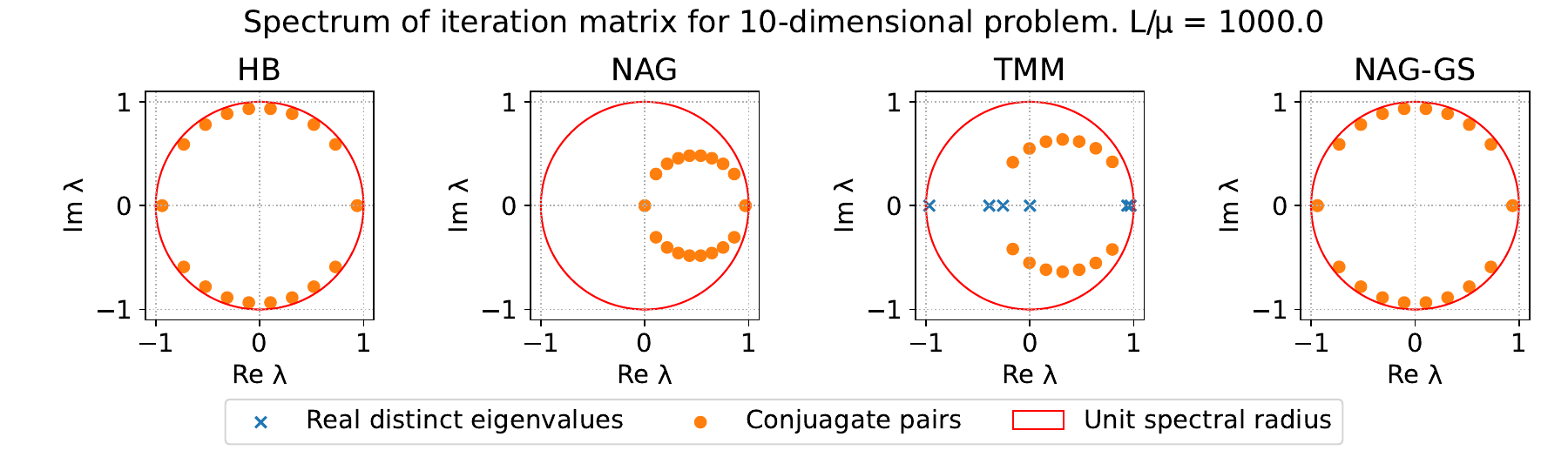}
 \includegraphics[width=\linewidth]{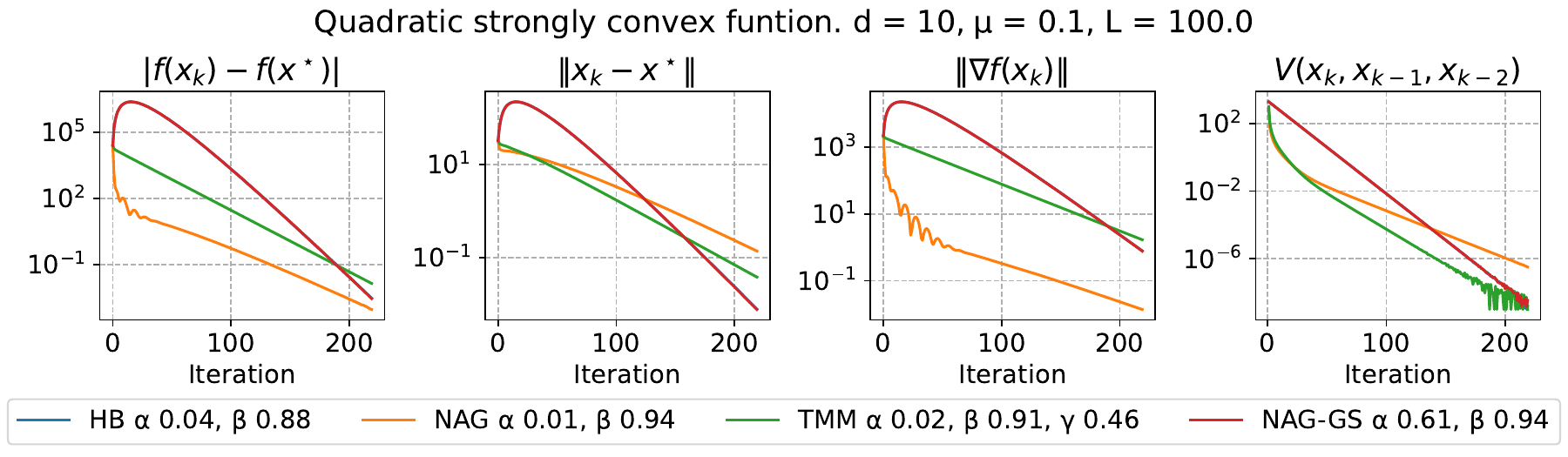}
 \caption{The correspondance between the Spectrum of iteration matrix for \textbf{HB}, \textbf{NAG}, \textbf{TMM}, \textbf{NAG-GS} methods with optimal hyperparameters applied to strongly convex $10$-dimensional quadratics and convergence characteristics.}
 \label{oplyap:fig:eigenvalues_distribution}
\end{figure}

It is important to mention, that for $d$-dimensional case the proposed Lyapunov function is a sum of geometric progressions for each coordinate with rates $\vert \lambda^{M_1} \vert, \vert \lambda^{M_2} \vert, \ldots, \vert \lambda^{M_d} \vert$ and thus the asymptotic convergence rate is determined by the worst among them, which means, that starting from some iteration number the convergence rate will be the spectral radius of the iteration matrix $\rho(M) = \max\limits_{i=1,\ldots, d} \vert \lambda^{M_i}\vert$. It is also interesting that in the \textbf{NAG} case, the eigenvalues of the iteration matrix in the multidimensional case differ significantly in magnitude, which leads to the fact that at the beginning the convergence of the Lyapunov function is determined by the convergence along those coordinates with the smallest magnitude eigenvalues (orange line in Figure~\ref{oplyap:fig:eigenvalues_distribution} from the bottom), and then it comes to the asymptotic convergence rate determined by the largest magnitude eigenvalues.

\begin{figure}[h!]
    \centering
    \includegraphics[width=\linewidth]{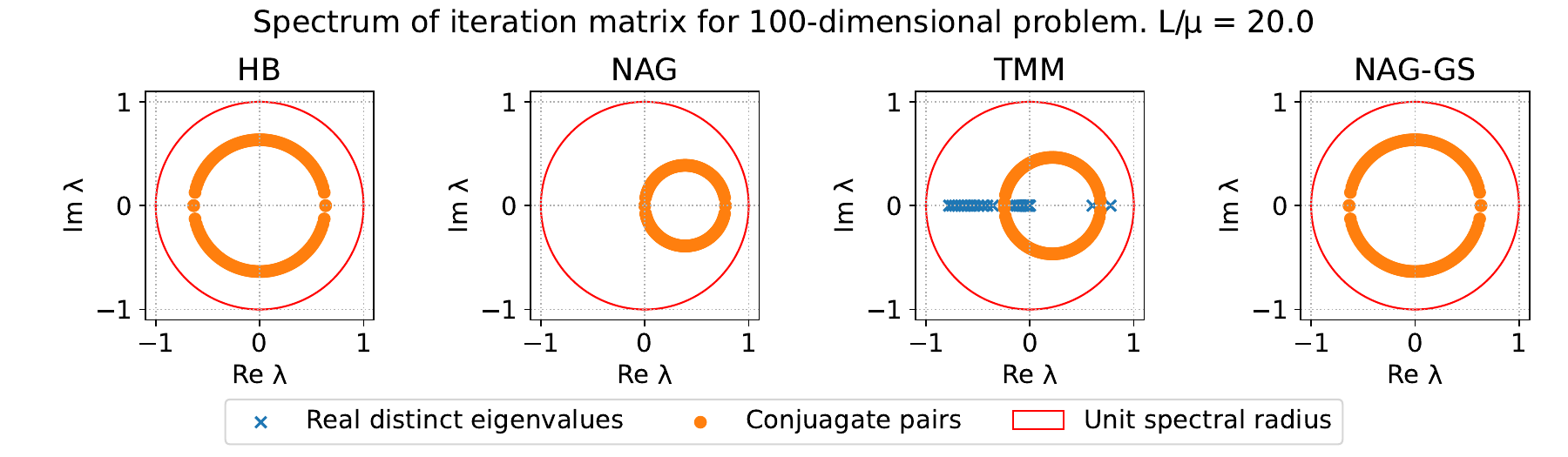}
    \includegraphics[width=\linewidth]{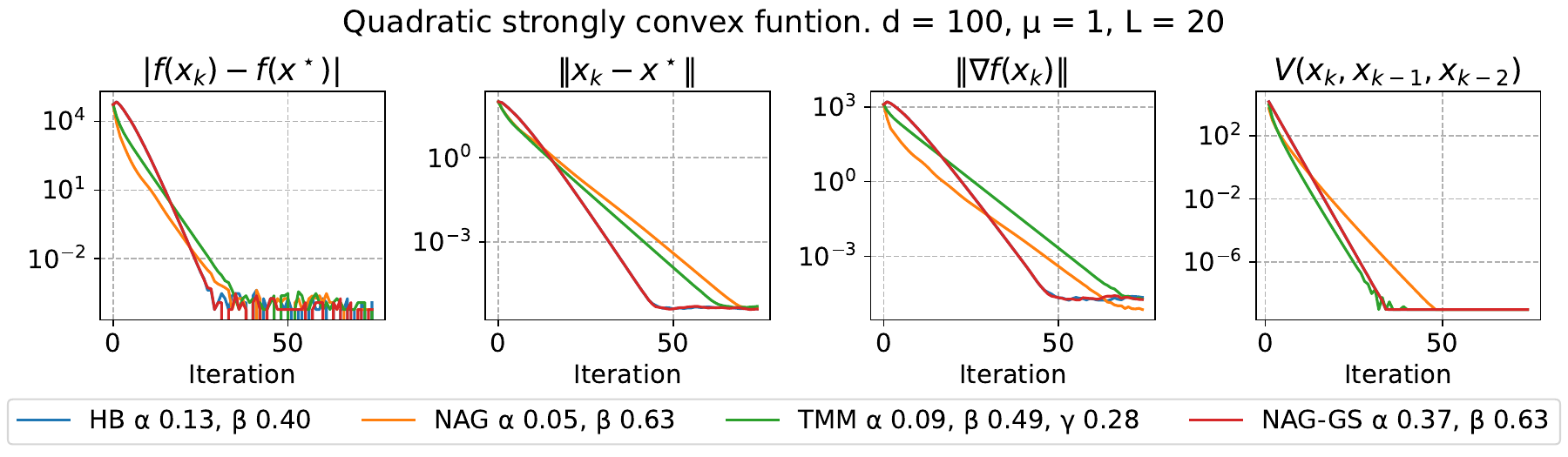}
    \caption{Dynamics of methods from Table \ref{oplyap:tab:two_step_methods} with optimal hyperparameters $\alpha^\star, \beta^\star, \gamma^\star$ applied to the strongly convex quadratic problem \eqref{oplyap:eq:full_problem}}
   \end{figure}

The spectrum of the considered method may say a lot about the convergence. For example on Figure~\ref{oplyap:fig:eigenvalues_distribution} it is easy to verify, that the \textbf{TMM} method does not satisfy the Theorem\ref{oplyap:th:main_theorem} requirements, therefore we can see, that the expression \eqref{oplyap:eq:lyapunov_function} is not Lyapunov function for the method. Moreover, we can see, that eigenvalue distribution for \textbf{HB} and \textbf{NAG-GS} significantly differs from the \textbf{NAG} and \textbf{TMM}. For the first group, the absolute values of the eigenvalues are the same (they form a circle on the complex plane), while for the latter the magnitudes of the eigenvalues vary. This is the reason why the corresponding $V(x_k, x_{k-1}, x_{k-2})$ dynamics is faster at the beginning of the optimization process and slows down at the end - convergence rate at the end depends only on the spectral radius (largest magnitude).

Note that the considered class of methods with the diagonal matrices $A$ and $B$ contains many popular methods (see Table~\ref{oplyap:sec:block_matrix_formulation}). However, the whole idea of constructing the described Lyapunov function relies essentially on the fact that we can consider the dynamics of each component of the vector x independently (see Section~\ref{oplyap:sec:reduction_to_a_scalar}). Arbitrary methods with an arbitrary iteration matrix, in general, cannot fail to be suitable for such a procedure of Lyapunov function construction.

The proposed Lyapunov function works well for a variety of scenarios. However, it is not a Lyapunov function for a general (strongly) convex optimization case. The counter-examples are provided in the corresponding sections below.

\section{Numerical experiments}

All the code for experiments is available on the GitHub Repository: \\ \href{https://github.com/MerkulovDaniil/SimpleLyapunov}{https://github.com/MerkulovDaniil/SimpleLyapunov}

\subsection{Quadratic problem}
To validate the theoretical claims, we conducted experiments on quadratic problems defined in equation \eqref{oplyap:eq:full_problem}. We applied various first-order methods, including Heavy Ball and Nesterov Accelerated Gradient, to minimize the quadratic function.

We started by randomly generating matrices $W \in \mathbb{S}^d_{++}$ of dimensions $d=100,200,500$. The generated matrices were ensured to be positive definite. Moreover, the spectrum of the matrices is uniformly spread from $\mu$ to $L$. Then, we generated random vector $x^\star \in \mathbb{R}^d$ and a vector $b \in \mathbb{R}^d$ was also calculated as $b = Ax^\star$ for each test case. For each setup, we performed iterations using various algorithms and monitored the value of the proposed Lyapunov function $ V(x_k, x_{k-1}, x_{k-2}) = \Vert x_{k-1} - x^*\Vert^2 - \langle x_k - x^*, x_{k-2} - x^* \rangle $. In our experiments, the tolerance of $V$ measuring is $10^{-9}$, which is why we can see a plateau at this level at the end.

\begin{figure}[h!]
    \centering
    \includegraphics[width=\linewidth]{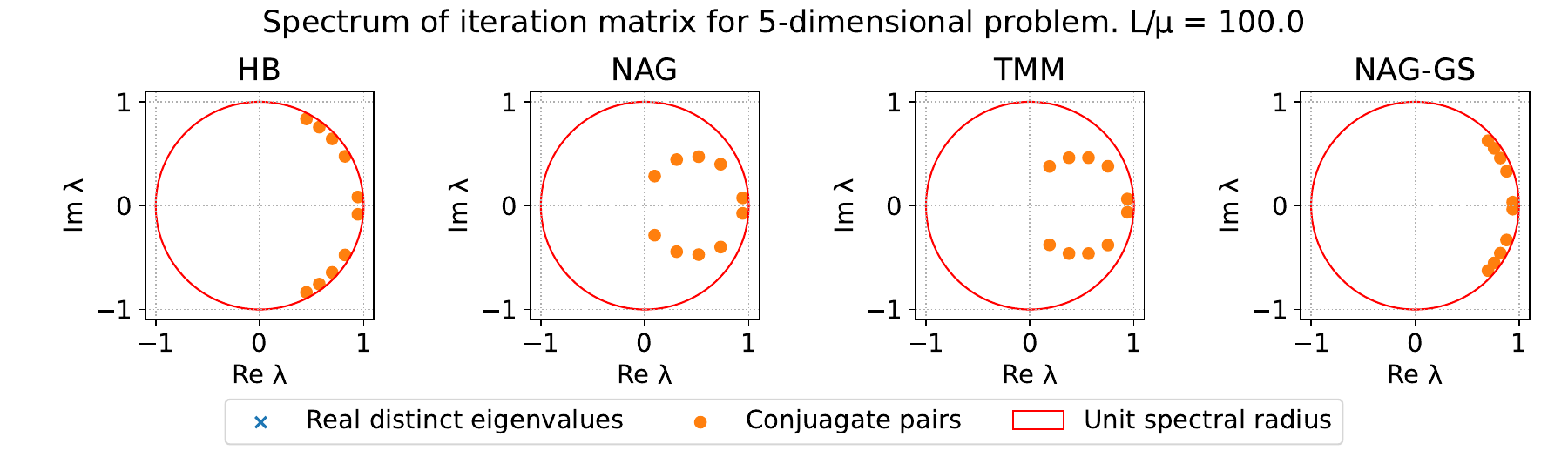}
    \includegraphics[width=\linewidth]{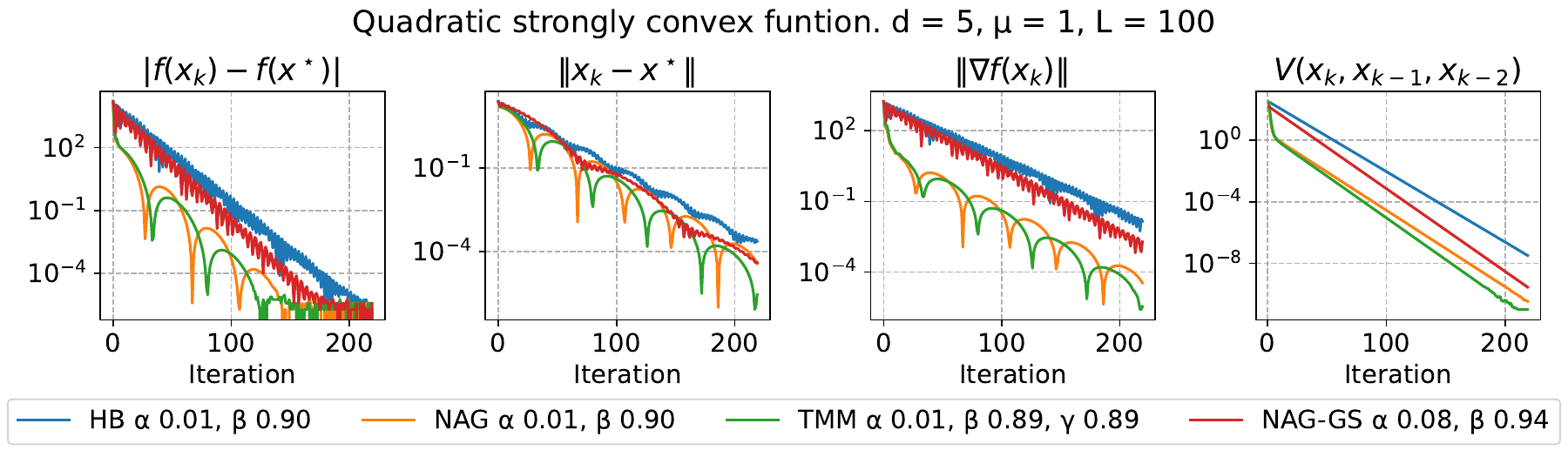}
    \caption{Dynamics of methods from Table \ref{oplyap:tab:two_step_methods} with non-optimal hyperparameters $\alpha, \beta, \gamma$ applied to the strongly convex quadratic problem \eqref{oplyap:eq:full_problem}}
    \label{oplyap:fig:non_optimal}
\end{figure}

\subsubsection{Optimal hyperparameters for methods}

The results are consistent with the theoretical predictions. The Lyapunov function monotonically decreased and approached zero as the methods converged. This indicates that our Lyapunov function provides an accurate measure of algorithmic behavior for quadratic problems. Note, that for \textbf{TMM} method we don't have theoretical guarantees for $V$ to be a Lyapunov function. Here are the results for the ill-conditioned quadratic problem:

\subsubsection{Non-optimal, but suitable hyperparameters}

It is especially interesting to look at Figure \ref{oplyap:fig:non_optimal}, where all considered methods meet Theorem \ref{oplyap:th:main_theorem} requirements, despite having non-optimal hyperparameters. Nowadays, such formulation of methods, where hyperparameters are to be tuned, is widely spread in Applications - Neural Networks training.

\subsubsection{Convex quadratic problem with $\mu=0$.}

It follows from the structure of the matrix $M$, that if the spectrum of the original matrix $W$ has $k$ zero eigenvalues, then we will have $k$ real unit eigenvalues, which corresponding summands $V^i(x_k, x_{k-1}, x_{k-2})$ will not decrease during the iteration process. Practically speaking it means, that some terms of expression \eqref{oplyap:eq:lyapunov_function} will linearly decrease, which leads to an almost linear decrease of the $V(x_k, x_{k-1}, x_{k-2})$ until some level, after that we will have some oscillations. This is supported by Figure~\ref{oplyap:fig:convex_quad}.

\begin{figure}[h!]
 \centering
 \includegraphics[width=\linewidth]{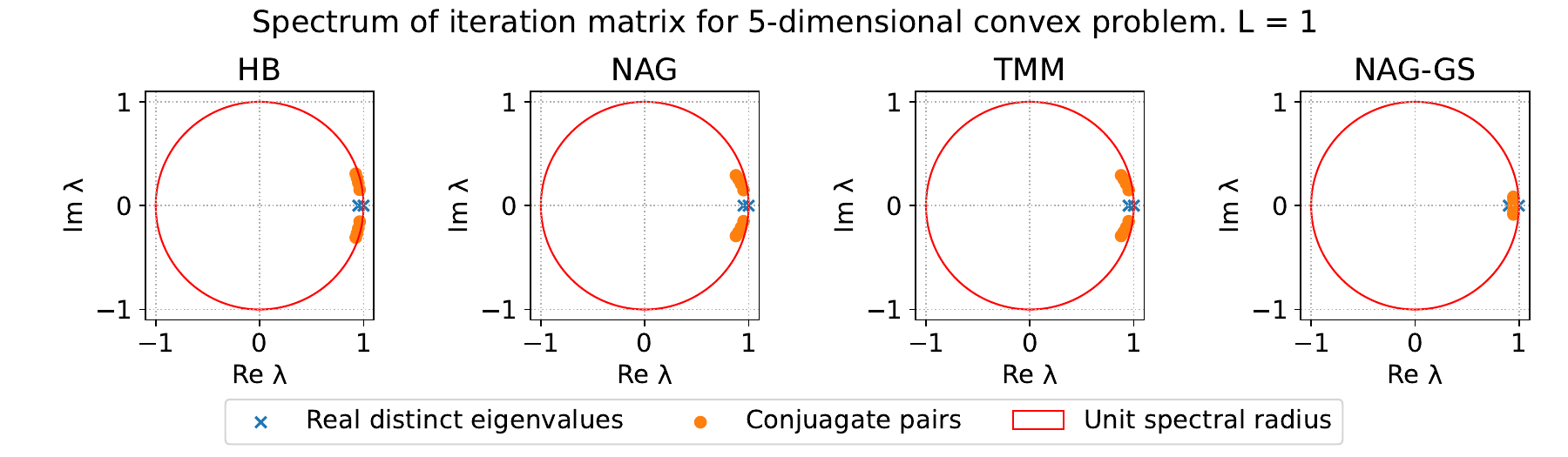}
 \includegraphics[width=\linewidth]{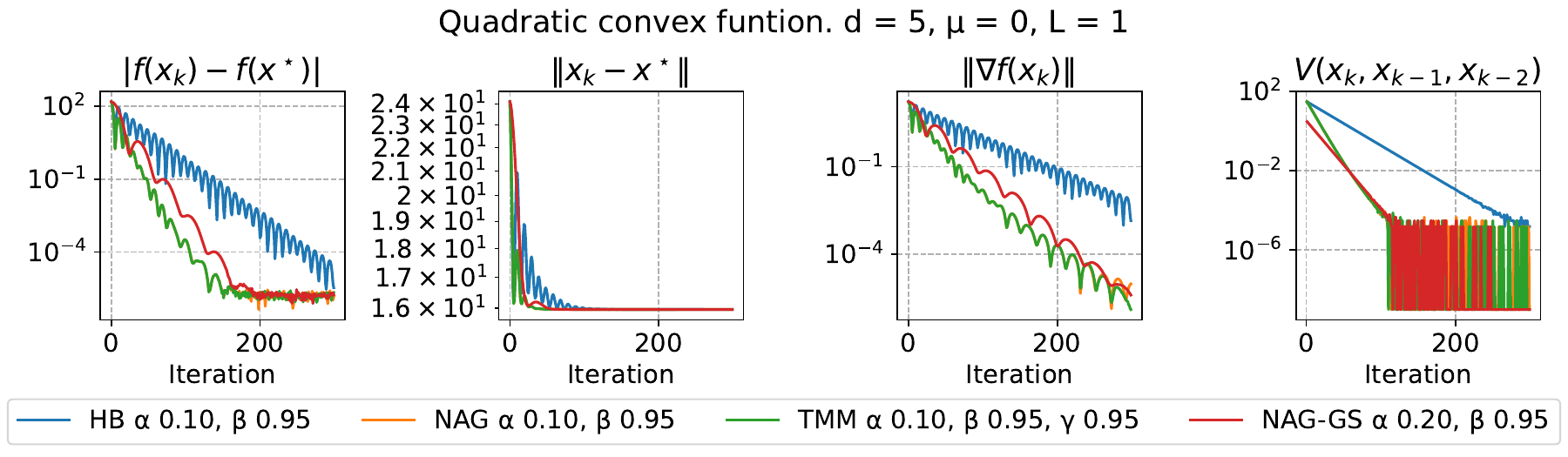}
 \caption{Dynamics of methods from Table \ref{oplyap:tab:two_step_methods} with some hyperparameters $\alpha, \beta, \gamma$ applied to the convex quadratic problem \eqref{oplyap:eq:full_problem}}
 \label{oplyap:fig:convex_quad}
\end{figure}

\subsection{Strongly convex non-quadratic problem}
We considered an example of the convex problem, where \textbf{HB} method failed to converge with optimal hyperparameters for the strongly convex function \cite{polyak1987introduction}
$$
f(x) = x^2 + \dfrac{1.99}{400}\cos(20x)
$$
It has $\mu = 0.01, L=3.99$

\begin{figure}[h!]
 \centering
 \includegraphics[width=\linewidth]{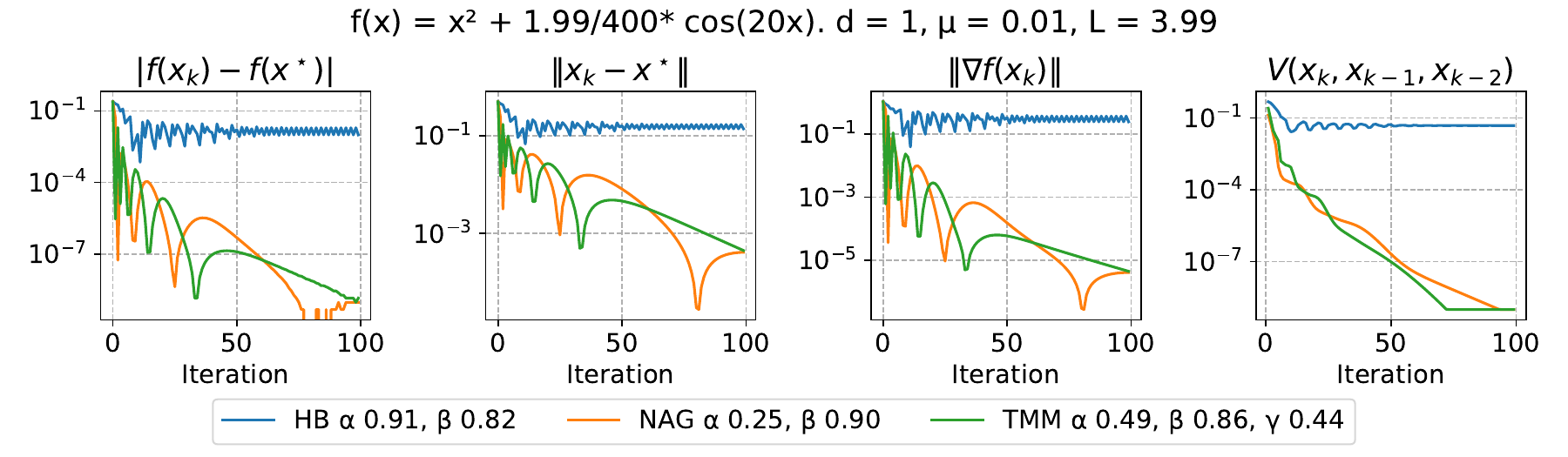}
\end{figure}

\begin{figure}[h!]
 \centering
 \includegraphics[width=\linewidth]{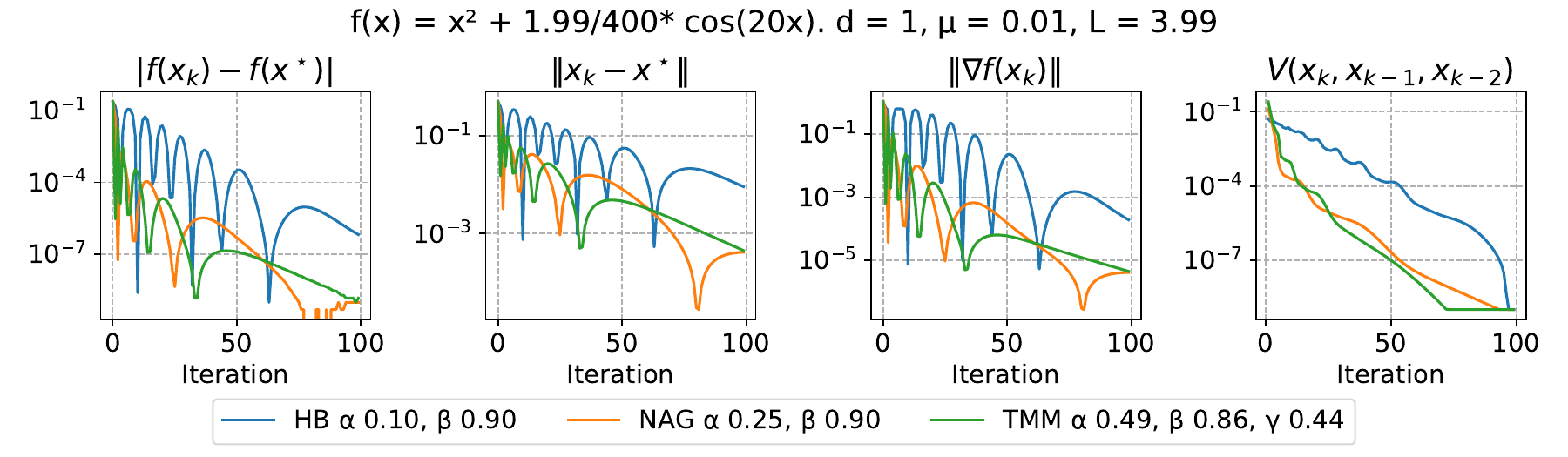}
\end{figure}

One can conclude, that such a simple function won't serve as a general-purpose Lyapunov function.

\section{Conclusion}
We presented a novel (to the best of our knowledge) approach to construct a Lyapunov function for a quadratic optimization problem and first-order algorithms, based on the bounding of the last diagonal element of the iteration matrix after Schur decomposition. It is interesting to mention, that such a simple expression serves as a Lyapunov function for the wide family of two-step methods, such as Heavy Ball, Nesterov Accelerated Gradient, Triple Momentum Method, and Nesterov Accelerated Gradient with Gauss-Seidel splitting method under some conditions, which is formulated as a main result of the paper. We have conducted experiments on quadratics, that support our claims and presented a counter-example of a general strongly convex function, where the constructed function is not a Lyapunov function.

\newpage
\printbibliography

\newpage
\appendix
\section{Two-step notation of gradient methods for quadratic minimization}
\label{oplyap:sec:block_matrix_formulation}
\begin{table}[h!]
 \centering
 \begin{tabular}{@{}ccc@{}}
 \toprule
 Method & \multicolumn{2}{c}{Iteration} \\ \midrule
 \multirow{2}{*}{\textbf{HB} \cite{polyak1964some}} & \multicolumn{2}{c}{$x_{k+1} = x_k - \alpha\nabla f(x_k) + \beta(x_k - x_{k-1}) $} \\
 & \multicolumn{2}{c}{$x_{k+1} = ((1+ \beta)I - \alpha \Lambda)x_k - \beta x_{k-1} $} \\
 Optimal $\alpha, \beta$: & \multirow{2}{*}[-12pt]{$A = (1+ \beta)I - \alpha \Lambda$} & \multirow{2}{*}[-12pt]{$B = -\beta I$} \\
 $\alpha^\star = \frac{4}{\left( \sqrt{L} + \sqrt{\mu}\right)^2}, \; \beta^\star = \frac{\left( \sqrt{L} - \sqrt{\mu}\right)^2}{\left( \sqrt{L} + \sqrt{\mu}\right)^2}$ & & \\ \midrule
 
 \multirow{2}{*}{\textbf{NAG} \cite{nesterov2003introductory}} & \multicolumn{2}{c}{$\begin{cases}y_{k+1} = x_k + \beta (x_k - x_{k-1}) \\ x_{k+1} = y_{k+1} - \alpha \nabla f(y_{k+1}) \end{cases}$} \\
 & \multicolumn{2}{c}{$x_{k+1} = (1+\beta)\left(I - \alpha \Lambda\right)x_k - \beta\left(I - \alpha \Lambda\right)x_{k-1}$} \\
 Optimal $\alpha, \beta$: & \multirow{2}{*}[-7pt]{$A = (1+\beta)\left(I - \alpha \Lambda\right)$} & \multirow{2}{*}[-7pt]{$B = - \beta\left(I - \alpha \Lambda\right)$} \\
 $\alpha^\star = \frac{1}{L}, \; \beta^\star = \frac{\sqrt{L} - \sqrt{\mu}}{\sqrt{L} + \sqrt{\mu}}$ & & \\ \midrule
 
 \multirow{2}{*}{\textbf{TMM} \cite{van2017fastest}} & \multicolumn{2}{c}{$x_{k+1} = (1+\beta)x_k - \beta x_{k-1} -\alpha \nabla f\left( (1 + \gamma)x_k - \gamma x_{k-1}\right)$} \\
 & \multicolumn{2}{c}{$x_{k+1} = ((1+ \beta)I - \alpha (1 + \gamma) \Lambda)x_k + \left(\alpha \gamma \Lambda - \beta I \right)x_{k-1} $} \\
 Optimal $\alpha, \beta, \gamma^\star$: $\rho = 1 - \sqrt{\frac{\mu}{L}}$ & \multirow{2}{*}[-7pt]{$A = (1+ \beta)I - \alpha (1 + \gamma) \Lambda$} & \multirow{2}{*}[-7pt]{$B =\alpha \gamma \Lambda - \beta I $} \\
 $\alpha^\star = \frac{1 + \rho}{L}, \; \beta^\star = \frac{\rho^2}{2 - \rho}, \; \gamma^\star = \frac{\rho^2}{(1 + \rho)(2-\rho)}$ & & \\ \midrule
 
 \multirow{2}{*}{\textbf{NAG-GS} \cite{leplat2022nag}} & \multicolumn{2}{c}{$\begin{cases}y_{k} = \beta y_{k-1} + (1-\beta)x_k - \alpha \nabla f(x_k)\\ x_{k+1} = \beta x_{k} + (1-\beta)y_k \end{cases}$} \\
 & \multicolumn{2}{c}{$x_{k+1} = (2 \beta + (1-\beta)^2)I - \alpha (1-\beta) \Lambda x_k - \beta^2 x_{k-1} $}\\
 Optimal $\alpha, \beta$: & \multirow{2}{*}[-7pt]{$A = (2 \beta + (1-\beta)^2)I - \alpha (1-\beta) \Lambda $} & \multirow{2}{*}[-7pt]{$B = - \beta^2 I$} \\
 $\alpha^\star = \frac{2 + 2\sqrt{\frac{L}{\mu}}}{L + \mu + 2\sqrt{\mu L}}, \; \beta^\star = \frac{L - \mu}{L + \mu + 2 \sqrt{\mu L}}$ & & \\ \bottomrule
 \end{tabular}
 \caption{Correspondence between several accelerated methods for strongly convex functions and its reformulations concerning two-step notation.}
 \label{oplyap:tab:methods}
\end{table}

\section{Experiments}

\subsection{Quadratic Problem}

$$
\min\limits_{x \in \mathbb{R}^d} \dfrac{1}{2} x^T W x - b^T x + c, \text{ where }W \in \mathbb{S}^d_{++}
$$

\begin{figure}[h!]
 \centering
 \includegraphics[width=\linewidth]{quad_simple_optimal_100.pdf}
\end{figure}

\begin{figure}[h!]
 \centering
 \includegraphics[width=\linewidth]{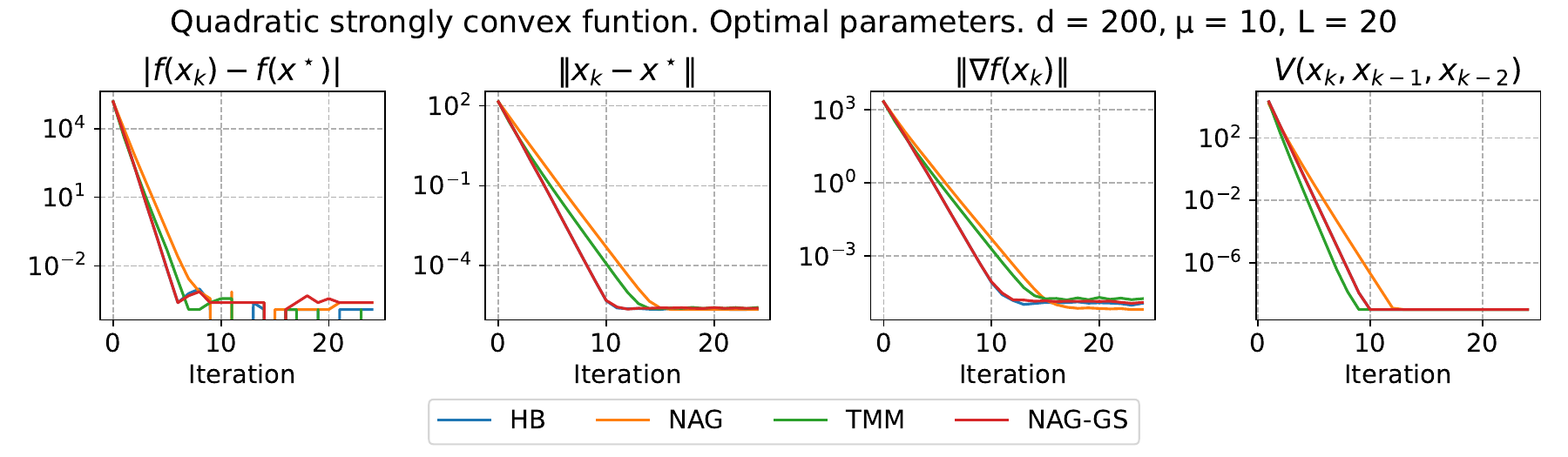}
\end{figure}

\begin{figure}[h!]
 \centering
 \includegraphics[width=\linewidth]{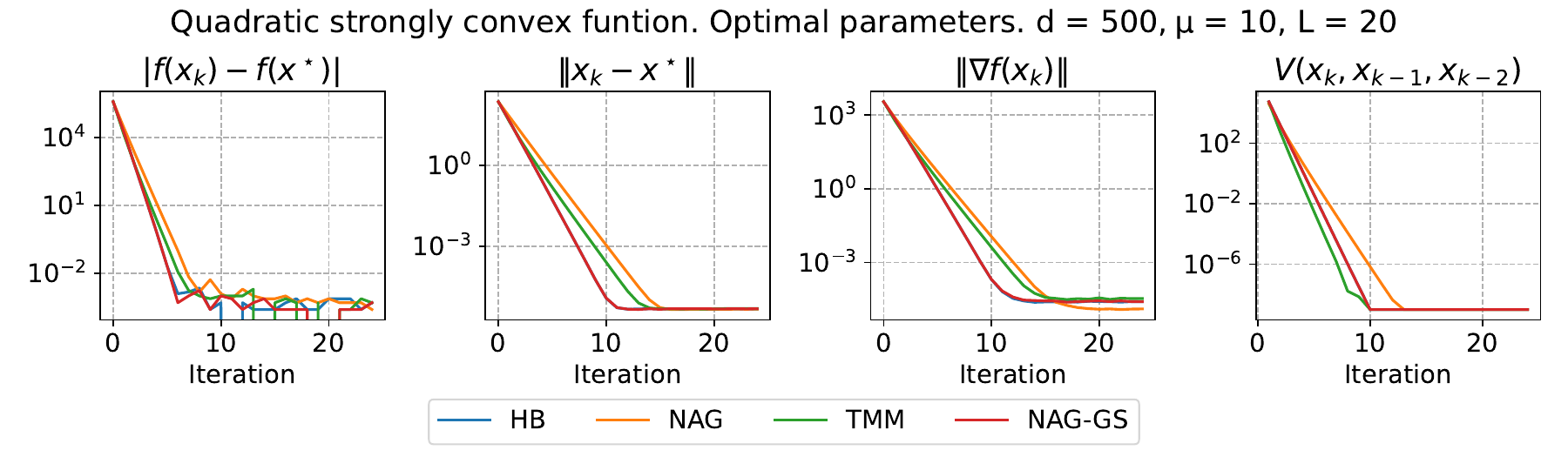}
\end{figure}

\begin{figure}[h!]
 \centering
 \includegraphics[width=\linewidth]{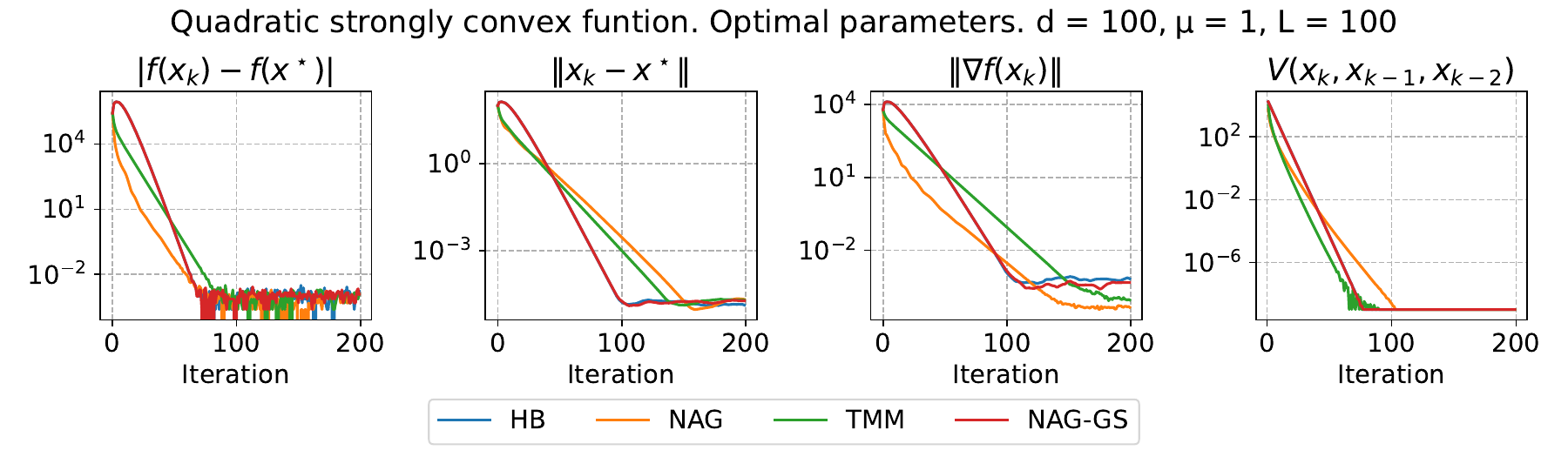}
\end{figure}

\begin{figure}[h!]
 \centering
 \includegraphics[width=\linewidth]{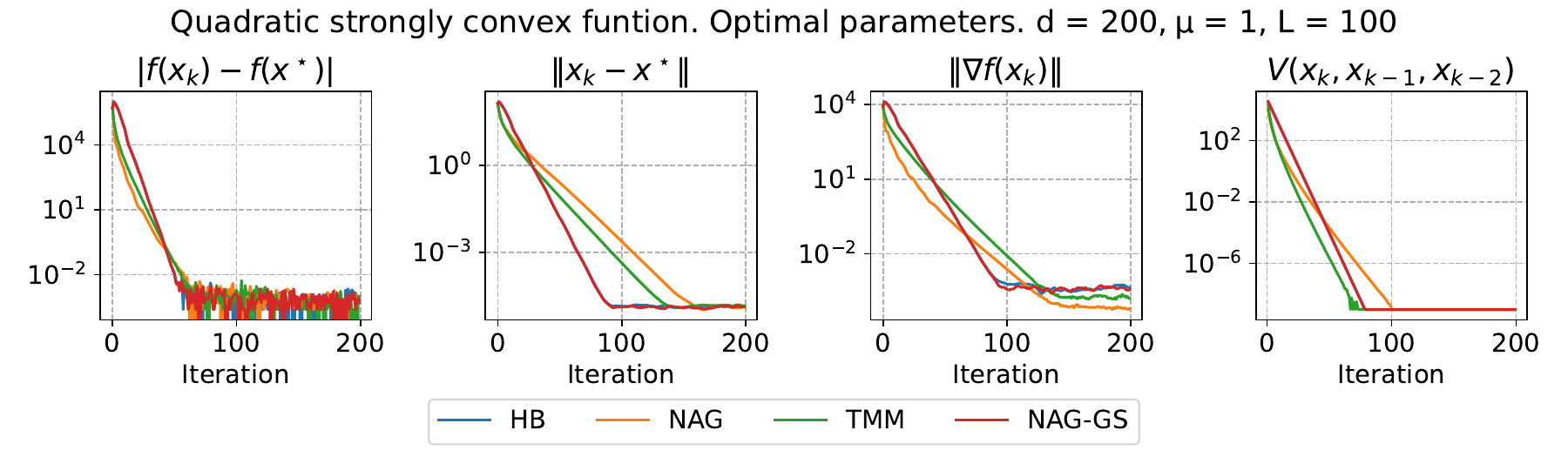}
\end{figure}

\begin{figure}[h!]
 \centering
 \includegraphics[width=\linewidth]{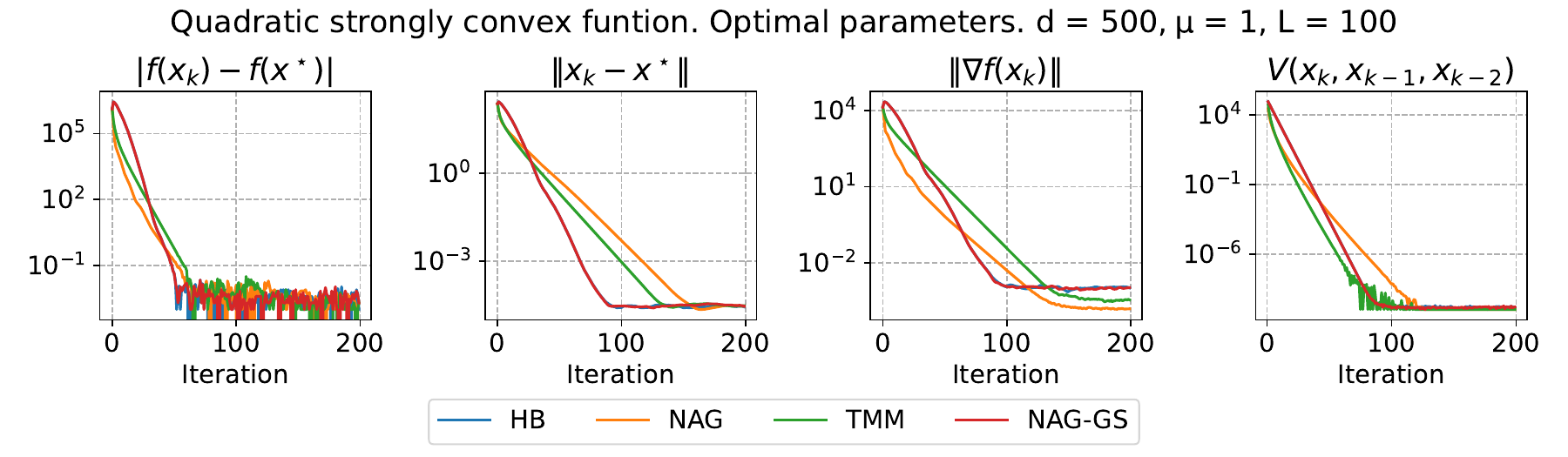}
\end{figure}

\subsection{Convex problem}
$$
\min\limits_{x \in \mathbb{R}^d} f(x) = e^{\Vert x\Vert ^2_2}
$$

\begin{figure}[h!]
 \centering
 \includegraphics[width=\linewidth]{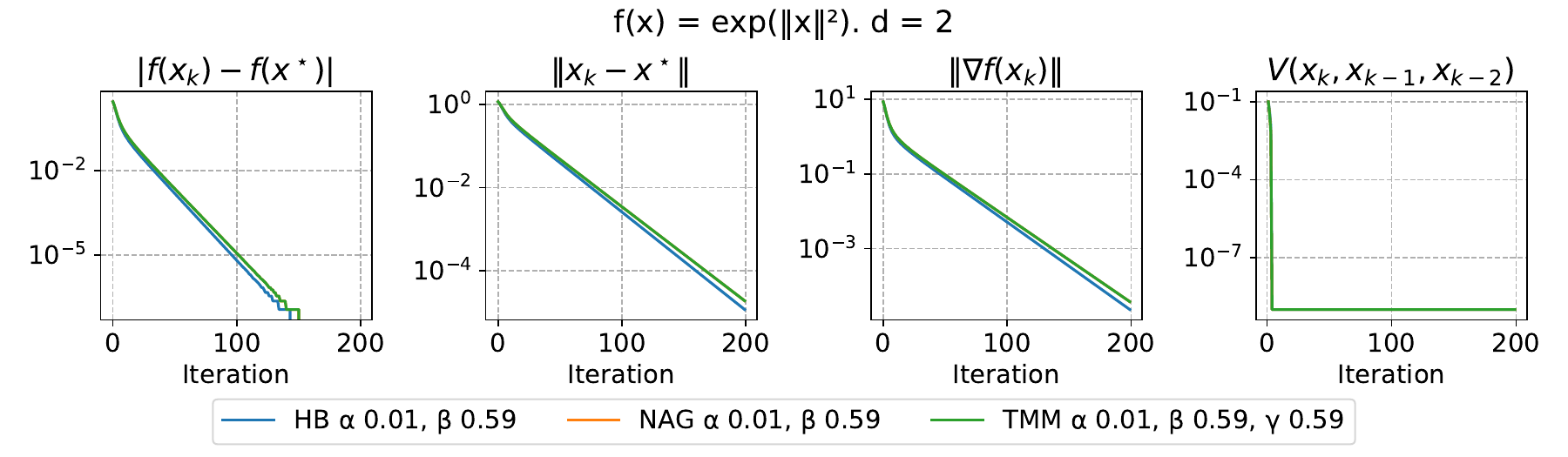}
\end{figure}

\subsection{Non-convex problem}

$$
\min\limits_{x,y \in \mathbb{R}^2} f(x, y) = (1 - x)^2 + 100(y - x^2)^2, \; x^\star = (1, 1)
$$

\begin{figure}[h!]
 \centering
 \includegraphics[width=\linewidth]{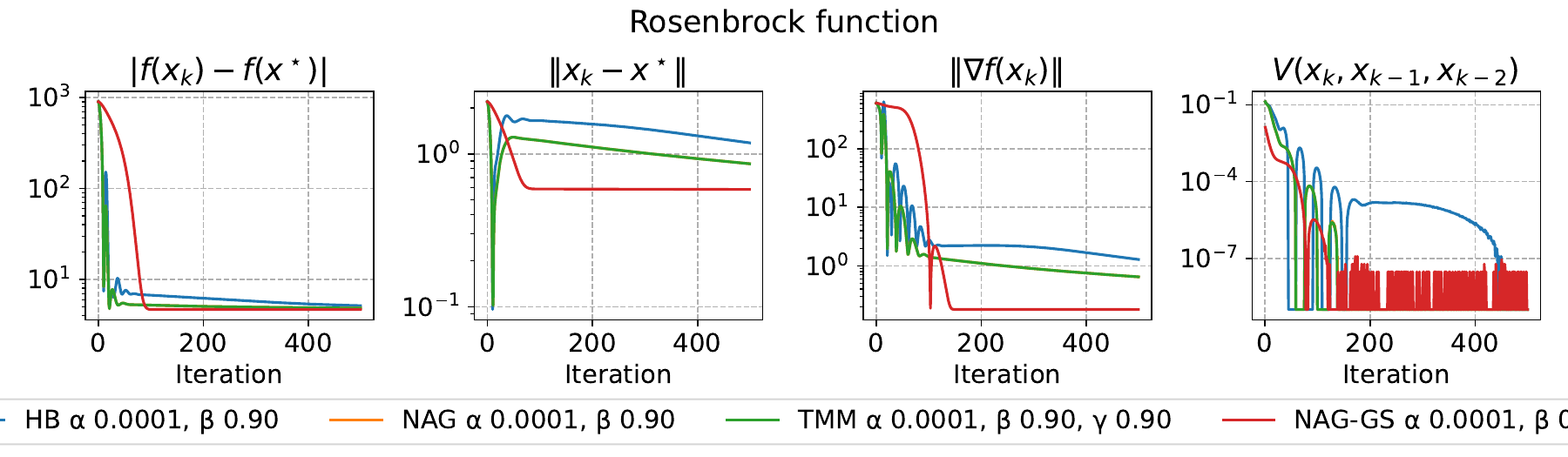}
\end{figure}

\end{document}